\documentclass[11pt]{amsart}
\numberwithin{equation}{section}
\usepackage[english]{babel}
\usepackage[T1]{fontenc}
\usepackage[latin1]{inputenc}
\usepackage{indentfirst}
\usepackage{enumitem}
\usepackage{amsmath,amssymb, amsbsy}
\usepackage{comment}
\usepackage{amsfonts}
\usepackage{hyperref}
\usepackage{cleveref}
\usepackage{esint}
\usepackage{latexsym}
\usepackage{amsthm}
\usepackage[dvips]{graphicx}
\usepackage{xcolor}
\usepackage{tikz}
\usepackage{pgfplots}
\usetikzlibrary{intersections}
\usepackage{tikz-3dplot}
\usepackage[outline]{contour}
\DeclareGraphicsExtensions{.pdf,.png,.jpg,.eps}
\usepackage{amsthm}
\allowdisplaybreaks

\usepackage{bbm}

\usepackage[font=small,labelfont=bf]{caption}

\usepackage{amsthm}

\usepackage{hyperref}

\usepackage{xcolor}
\usepackage[latin1]{inputenc}
\usepackage[active]{srcltx}
\definecolor{Arancio}{cmyk}{0,0.61,0.87,0}

\definecolor{blus}{RGB}{0,102,204}

\usepackage{doi}
\newcommand{\arxiv}[1]{arXiv:\href{https://arxiv.org/abs/#1}{#1}}
\newcommand{\brd}[1]{\mathbb{#1}}
\newcommand{\R}{\brd{R}}

\newcommand{\e}{\varepsilon}

\newcommand{\ee}{\end{equation}}

\newtheorem{teo}{Theorem}[section]
\newtheorem{Corollary}[teo]{Corollary}
\newtheorem{Lemma}[teo]{Lemma}

\newtheorem{Proposition}[teo]{Proposition}
\theoremstyle{definition}
\newtheorem{Definition}[teo]{Definition}
\newtheorem{Assumption}[teo]{Assumption}

\newtheorem{remark}[teo]{Remark}

\newcommand{\supp}{\operatorname{spt}}

\newcommand{\D}{\nabla}

\newcommand{\dist}{\operatorname{dist}}
\renewcommand{\div}{\operatorname{div}}

\newcommand{\loc}{{\rm loc}}

\usepackage{amsmath}
\usepackage{tikz}
\usepackage{mathdots}
\usepackage{yhmath}
\usepackage{cancel}
\usepackage{color}
\usepackage{siunitx}
\usepackage{array}
\usepackage{multirow}
\usepackage{amssymb}
\usepackage{textcomp}
\usepackage{gensymb}
\usepackage{tabularx}
\usepackage{extarrows}
\usepackage{booktabs}
\usetikzlibrary{fadings}
\usetikzlibrary{patterns}
\usetikzlibrary{shadows.blur}
\usetikzlibrary{shapes}

\pgfplotsset{compat=1.18} 
\begin{document}

\subjclass[2020] {35B45, 35B65, 35J70}
\keywords{Higher order boundary Harnack principle; Ratio of solutions; Schauder a priori estimates; Weighted elliptic equations}

\title[Regularity Estimates for Ratio of Solutions to Elliptic Equations]{A Priori Regularity Estimates for Ratio of Solutions to Elliptic Equations with a Product Structure of Two-Dimensional Nodal Sets}

\author{Gabriele Fioravanti}

\address{Gabriele Fioravanti\newline\indent
Dipartimento di Matematica "G. Peano"
\newline\indent
Universit\`a degli Studi di Torino
\newline\indent
Via Carlo Alberto 10, 10124, Torino, Italy}
\email{gabriele.fioravanti@unito.it}

\begin{abstract}
In this paper, we establish optimal a priori $C^{1,\alpha}$ regularity estimates for the ratio $w = v/u$ of two solutions to the same elliptic equation $-\div(A \D u )=0$ with Lipschitz coefficients $A$, under the assumption that their nodal sets satisfy $Z(u) \subseteq Z(v)$. We specifically address the case where the zero set $Z(u)$ exhibits a product structure of $2$-dimensional nodal sets, namely $Z(u)=Z(u_1)\times \cdots \times Z(u_{m})$, where the $u_i$ are $2$-dimensional functions. This result extends the regularity estimates previously proved in dimension $2$ by Logunov and Malinnikova \cite{LogMal16} and by Terracini, Tortone, and Vita \cite{TerTorVit24b}.
\end{abstract}

\maketitle
\vspace{-3pt}

\section{Introduction}

The behavior of non-negative harmonic functions near the boundary is classically described by the \emph{Boundary Harnack Principle}. This result ensures that the ratio of two such functions vanishing on a common boundary portion remains bounded or H\"older continuous up to the boundary. It is a well-established fact that this property holds under weak geometric assumptions, including Lipschitz, NTA, and H\"older domains (see \cite{Kem,Dah,Anc,JerKen,B1,B2,DeSSav20}). 
When the domain exhibits higher regularity, one naturally expects the ratio to enjoy improved regularity as well. This is addressed by the \emph{Higher Order Boundary Harnack Principle}, which establishes that if the boundary is of class $C^{k,\alpha}$, the quotient inherits the same $C^{k,\alpha}$ regularity up to the boundary (see \cite{DeSSav15,BG16,TerTorVit24,Zha24,AudFioVit25}).

A more recent and challenging direction in this field involves extending the classical theory to the framework of \emph{nodal domains}. Within this setting, one examines two solutions $u$ and $v$ sharing the same uniformly elliptic equation
\begin{equation}\label{eq:elliptic}
    L_A u := -\div(A(x)\D u)  =0 \quad \text{in } B_1\subset\R^n,
\end{equation}
where the coefficient matrix $A(x)$ is symmetric and fulfills the uniform ellipticity condition
\begin{equation}\label{eq:Unif:Ell} 
   \lambda |\xi|^2 \le  A(x)\xi\cdot\xi \le \Lambda |\xi|^2 \quad \text{for a.e. } x\in B_1 \text{ and every }\xi\in\R^n,
\end{equation}
for some $0<\lambda\le\Lambda$.
The key geometric constraint here is the nodal inclusion $Z(u)\subseteq Z(v)$, where $Z(u)=\{x\in B_1 : u(x)=0\}$. Under this assumption, the primary objective in studying the quotient $v/u$ is twofold:
\begin{enumerate}
    \item[i)] to establish the effective H\"older or higher order regularity of the ratio;
    \item[ii)] to determine if this regularity is \emph{uniform} within specific classes of denominators $u$, thereby understanding the structural impact of the nodal set $Z(u)$ on the estimates.
\end{enumerate}

Before proceeding, we recall a few structural properties of the nodal set $Z(u)$. Assuming $u$ is a weak solution to \eqref{eq:elliptic} with a symmetric, Lipschitz continuous matrix $A$ satisfying \eqref{eq:Unif:Ell}, classical regularity theory yields $u\in H^{2}_{\loc}(B_1)\cap C^{1,\alpha}_{\loc}(B_1)$ for any $\alpha\in (0,1)$. Furthermore, following \cite{Lin91,Han94}, we can decompose the nodal set as $Z(u)=R(u)\cup S(u)$, where the regular and singular parts are respectively given by
\[
R(u)=\{x\in Z(u):|\D u(x)|\neq 0\} \quad \text{and} \quad S(u)=\{x\in Z(u):|\D u(x)|=0\}.
\]
An immediate consequence of the Implicit Function Theorem is that $R(u)$ is locally $C^{1,\alpha}$ for every $\alpha\in(0,1)$. On the other hand, the singular set $S(u)$ has Hausdorff dimension bounded by $n-2$, reducing to a locally finite set in dimension $n=2$.

Outside the singular set $S(u)$, the analysis of the ratio $v/u$ under the inclusion $Z(u) \subseteq Z(v)$ reduces to the classical Higher Order Boundary Harnack Principle, which guarantees local $C^{1,\alpha}$ regularity for the quotient (see \cite{DeSSav15,TerTorVit24}).
In contrast, near the singular set $S(u)$, one cannot rely on these classical techniques, as the boundaries of the \emph{nodal domains} -- the connected components of $\{x\in B_1:u(x) \neq 0\}$ -- exhibit very poor regularity, being merely Lipschitz or even worse. Nevertheless, unlike in the classical Boundary Harnack Principle where the domain is imposed a priori, here the  "boundary" naturally arises as the zero set of a solution to an elliptic equation. Because of this intrinsic PDE structure, one might actually expect the ratio $v/u$ to enjoy higher regularity than what standard theory ensures for such rough domains.

The pioneering results in this direction are due to Logunov and Malinnikova \cite{LogMal15,LogMal16}. Specifically, under the assumption of analytic coefficients, they showed that the ratio $v/u$ is analytic across $Z(u)$. Furthermore, in dimension $n=2$, they proved uniform $C^{k,\alpha}$ estimates for the ratio within the class of denominators $u$ having an upper bound on the number of nodal domains.

Lowering the regularity framework, Lin and Lin \cite{LinLin22} addressed the problem under Lipschitz assumptions on the coefficients $A$. In every dimension, they established H\"older continuity for $v/u$ across the entire nodal set $Z(u)$ for some implicit exponent $\alpha^* \in (0,1)$, showing that such regularity estimates are uniform within the class of denominators $u$ having an upper bound on the generalized Almgren frequency functional.

More recently, in a series of works \cite{TerTorVit24,TerTorVit25,TerTorVit24b}, Terracini, Tortone, and Vita addressed both the effective regularity and the uniform estimates under Lipschitz assumptions on the coefficients. First, in every dimension and for every $\alpha\in(0,1)$, they obtained \emph{a priori} $C^{0,\alpha}$ estimates which are uniform within the class of denominators $u$ with a bounded generalized Almgren frequency functional. In dimension $n=2$, they proved that this class is equivalent to the one considered by Logunov and Malinnikova. Moreover, in the planar case, they established optimal \emph{a priori} $C^{1,\alpha}$ estimates for every $\alpha\in(0,1)$, which are uniform within the same class of denominators. Next, by means of a regularization-approximation scheme, they were able to prove the actual $C^{1,\alpha}$ regularity of the ratio.

The aim of this paper is to extend the $2$-dimensional uniform \emph{a priori} $C^{1,\alpha}$ estimates established by Logunov and Malinnikova and by Terracini, Tortone, and Vita to a higher-dimensional framework and in the case of Lipschitz continuous coefficients. Here, we address the case where the zero set $Z(u)$ exhibits a product structure of $2$-dimensional nodal sets, namely $Z(u)=Z(u_1)\times \cdots \times Z(u_{m})$, where the $u_i$ are $2$-dimensional functions (as detailed in Assumption \eqref{A:orthogonal} below). Under this geometric assumption, we establish optimal \emph{a priori} $C^{1,\alpha}$ estimates which are uniform within this frequency bounded class of denominators.

\subsection{Main Result}

To state our main result, we first introduce the necessary definitions and define the uniformity classes of coefficients and denominators.

Given $0<\lambda\le \Lambda $ and $ L\ge0$, we define the class of admissible coefficients $\mathcal{A}:=\mathcal{A}_{\lambda,\Lambda,L}$ as 
\[
\mathcal{A}:=\Big\{ A\in C^{0,1}(B_1,\R^{n,n}) : A=A^T \text{ satisfies }\eqref{eq:Unif:Ell} \text{ and } [A]_{C^{0,1}(B_1)}\le L \Big\}.
\]
Given $N_0>0$, we define the class $\mathcal{S}_{N_0}:=\mathcal{S}_{N_0,\lambda,\Lambda,L} $ as the solutions to \eqref{eq:elliptic}, for which the generalized Almgren frequency formula $N$ satisfies an upper bound, that is, 
\[
\mathcal{S}_{N_0}:=\Big\{ u\in H^1(B_1) : u \text{ solves } \eqref{eq:elliptic} \text{ with } A\in\mathcal{A}, \, 0 \in Z(u),  \, N(0,u,1) \le N_0, \, \|u\|_{L^2(B_1)} =1\Big\}.
\]
This class is the natural choice for seeking uniform estimates, as it ensures the compactness of the denominator in the blow-up analysis, as established in \cite{TerTorVit25}.

In our main result we make the following orthogonal assumption about the coefficients and the denominators $u$.

\begin{Assumption}[Orthogonality assumption]\label{A:orthogonal}
    Let us suppose that $n$ is an even number.
    Let us consider $A\in\mathcal{A}$. We assume that $A$ can be written in block form as 
    \[
    A(x) := \begin{pmatrix}
       A_1(x_1,x_2) & 0 & \cdots & 0 \\ 
       0 & A_2(x_3,x_4) & \cdots & 0 \\
       \vdots & \vdots & \ddots & \vdots \\
       0 &  0 & \cdots & A_{n/2}(x_{n-1},x_n) 
    \end{pmatrix},
    \]
    where $A_i$ is a $2$-dimensional matrix and every $A_i$ depends on the couple of variables $(x_{2i-1},x_{2i})$.
    We also assume that $u\in \mathcal{S}_{N_0}$ can be written in the form
    \[
u:=\prod_{i=1}^{n/2} u_i(x_{2i-1},x_{2i}),
\]
and every $u_i$ satisfies $L_A u_i= 0$ in $B_1$.

Instead, if $n$ is an odd number we consider the last block $A_{(n+1)/2}$ as a real-valued function and not a $2$-dimensional matrix and the last function $u_{(n+1)/2}$ depends only on $x_n$.

\end{Assumption}

Under the previous assumption, we prove optimal, uniform-in-$\mathcal{S}_{N_0}$ \emph{a priori} $C^{1,\alpha}$ estimates for the ratio $v/u$ of solutions to \eqref{eq:elliptic} satisfying $Z(u)\subseteq Z(v)$. For the sake of brevity, we say that a constant depends on the class $\mathcal{S}_{N_0}$ if it depends on $n$, $\lambda$, $\Lambda$, $L$, and $N_0$.

\begin{teo}[A priori $C^{1,\alpha}$ estimates]\label{T:1}
Let $\alpha\in(0,1)$, $A\in\mathcal{A}$, $u\in \mathcal{S}_{N_0}$ satisfying the Assumption \ref{A:orthogonal} and $v$ be a solution to
\[
L_A u=L_Av=0\quad  \text{in }B_1, \qquad \text{such that }{Z}(u)\subseteq {Z}(v).
\]
Then, if $v/u \in C^{1,\alpha}(B_1)$, there exists a constant $C>0$, which depends only on the class $\mathcal{S}_{N_0}$ and $\alpha$, such that
\begin{equation}\label{eq:T:1}
{\Big\| \frac{v}{u}  \Big\|_{C^{1,\alpha}(B_{1/2})}}\le C \|v\|_{L^2(B_1)}.
\end{equation}
\end{teo}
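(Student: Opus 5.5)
We argue by contradiction and blow-up, following the scheme of Terracini, Tortone and Vita for the planar case. The first reduction uses the equation for the ratio. Setting $w=v/u$, a direct computation shows that $w$ solves the degenerate (weighted) equation
\[
-\div\big(u^2 A\D w\big)=0 \quad\text{in } B_1 .
\]
Hence it suffices to prove uniform $C^{1,\alpha}$ estimates for solutions of this weighted problem with weight $u^2$, where $u$ ranges over the product class of Assumption \ref{A:orthogonal}. By the uniform $C^{0,\alpha}$ estimates of Terracini, Tortone and Vita, valid in every dimension for $u\in\mathcal S_{N_0}$, we already control $\|w\|_{C^{0,\alpha}(B_{3/4})}$ by $\|v\|_{L^2(B_1)}$. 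We may therefore normalize and assume $\|v\|_{L^2(B_1)}=1$.

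Next we set up the contradiction argument. Suppose there are sequences $A_k$, $u_k$, $v_k$ with
\[
[\D w_k]_{C^{0,\alpha}(B_{1/2})}\to\infty .
\]
We pick points $x_k,y_k$ nearly realizing the seminorm, with $r_k=|x_k-y_k|\to0$. We then rescale around $x_k$ and subtract the first-order Taylor polynomial:
\[
W_k(x)=\frac{w_k(x_k+r_kx)-w_k(x_k)-r_k\D w_k(x_k)\cdot x}{r_k^{1+\alpha}[\D w_k]_{C^{0,\alpha}}}.
\]
These functions have bounded $C^{1,\alpha}$ seminorm, nontrivial oscillation at scale one, and polynomial growth of order $1+\alpha$. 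For this step we need two compactness facts.
\begin{enumerate}
\item[i)] The rescaled denominators $\tilde u_k$, normalized in $L^2$, converge by the compactness of $\mathcal S_{N_0}$ proved in \cite{TerTorVit25}. Crucially, the product structure passes to the limit. Each factor $u_{k,i}$ is a planar solution with block coefficients $A_{k,i}$, and the bound on the frequency of $u$ bounds the frequency of each factor, since frequency is additive over orthogonal variables up to Lipschitz errors. So each factor converges separately, and the limit weight is $\prod_i \bar u_i^2$, where each $\bar u_i$ is a homogeneous harmonic polynomial in $(x_{2i-1},x_{2i})$ for the frozen constant block, or a linear function in the odd last variable. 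In the regime $r_k^{-1}\dist(x_k,Z)\to\infty$, the weight instead becomes nondegenerate or degenerates only along some factors.
\item[ii)] The coefficient errors, of order $r_k L$, disappear in the limit.
\end{enumerate}
We then pass to the limit and obtain an entire solution $W$ of
\[
-\div\Big(\prod_i \bar u_i^2\,\bar A\D W\Big)=0 ,
\]
with $|W(x)|\le C(1+|x|^{1+\alpha})$ and $W$ not affine. Obtaining this limit requires uniform energy bounds, which come from the $C^{1,\alpha}$ seminorm bound combined with a Caccioppoli inequality in the weighted space.

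The final step is a Liouville theorem for this product weight, and we expect it to be the main obstacle. After a linear change of variables in each block, $\bar A$ becomes the identity and each $\bar u_i$ becomes $\rho_i^{N_i}\cos(N_i\theta_i)$ in polar coordinates for the plane $(x_{2i-1},x_{2i})$. The operator then splits as a sum $\sum_i \bar u_i^{-2}\,\mathcal L_i$ of planar weighted operators acting on separate variables. We would first try separation of variables: expand $W$ in the joint eigenfunctions of the planar problems. For each factor, the planar Liouville theorem (the two-dimensional result of Terracini, Tortone and Vita) says that entire solutions with growth below $1+1$ are affine, more precisely that the homogeneity spectrum of the planar weighted operator has no value in $(1,2)$. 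Using the product structure, the homogeneous solutions of the full problem are sums of products of planar homogeneous ones, and the degrees add. Hence no homogeneity lies in $(1,1+\alpha]$, which forces $W$ to be affine, a contradiction.

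The delicate points in this last step are twofold. First, the spectrum is additive only for genuinely separated solutions, so we must justify the expansion, for instance via an Almgren monotonicity formula for the weighted operator, which gives homogeneous blow-downs. Second, we must rule out a degree-one total built from degree-zero factors that are not constant; this requires showing that the only degree-zero planar solutions are constants.

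With the Liouville theorem in hand, the contradiction is complete, and we combine it with the $C^{0,\alpha}$ bound and interpolation to obtain \eqref{eq:T:1}.
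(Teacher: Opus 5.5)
The gap is in the passage to the limit. Subtracting the first-order Taylor polynomial does not preserve the weighted equation, because affine functions are not solutions of $-\operatorname{div}(u^2A\nabla w)=0$: indeed $\operatorname{div}(U^2\bar A e)=2U\nabla U\cdot\bar A e\neq 0$ in general. Write $c_k:=\nabla w_k(x_k)/(r_k^{\alpha}[\nabla w_k]_{C^{0,\alpha}})$. Then your $W_k$ satisfies
\[
\int U_k^2\bar A_k\nabla W_k\cdot\nabla\phi\,dx= 2\int U_k\nabla U_k\cdot A_k(x_k)c_k\,\phi\,dx+O(r_k|c_k|).
\]
Only the $O(r_k|c_k|)$ piece is the ``coefficient error'' you mention, and that piece does vanish. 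The $C^{1,\alpha}$ bounds take care of the left-hand side, but nothing in your setup bounds $c_k$. You know that $\|\nabla w_k\|_{L^\infty}$ is small compared with $[\nabla w_k]_{C^{0,\alpha}}$, not compared with $r_k^{\alpha}[\nabla w_k]_{C^{0,\alpha}}$, so nothing prevents $|c_k|\to\infty$. If $c_k$ stayed bounded you could absorb it, since $W_k+c_k\cdot x$ is a genuine solution, and Liouville would conclude. In general it does not, and proving that the main term on the right tends to zero is the entire content of Section~\ref{S:4.1}, i.e.\ the actual new part of the proof.

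What is missing is the mechanism that kills this term. It is the conormal condition \eqref{eq:conormal:wk}, $\nabla u_k\cdot A_k\nabla w_k=0$ on $R(u_k)$, which says that the dangerous vector $A_k\nabla w_k$ is tangent to the nodal set at regular points. To use it, the paper does four things:
\begin{itemize}
\item[(a)] It splits $U\nabla U=\sum_i (U/U_i)^2\,U_i\nabla U_i$ so that each planar factor is handled separately. This, not the Liouville step, is where Assumption~\ref{A:orthogonal} is used.
\item[(b)] It recenters the blow-up, factor by factor, at the projection of $x_k$ onto $R(u_{i,k})$ whenever that distance is $O(r_k)$, rather than at $x_k$.
\item[(c)] It distinguishes regimes via the distance $\delta_k$ to $R(u_{i,k})$ and the critical radius $\delta_k^*$ at which the frequency reaches $1+\varepsilon$, obtaining decay from scale-invariant Schauder estimates plus doubling.
\item[(d)] Near singular points, one regular point controls only the normal component of $A_k\nabla w_k$. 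So it invokes Lemma~\ref{L:2:points} to find a second regular point within distance $\lesssim\delta_k^*$ with a transversal normal, and the H\"older oscillation of $\nabla w_k$ then controls the tangential component too. When $\delta_k^*\lesssim r_k$, it also changes the normalization by not subtracting the in-plane linear part.
\end{itemize}

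Conversely, the step you flag as the main obstacle is not one. Once the right-hand side vanishes, $U=\prod_i U_i$ is a polynomial solution, and Remark~\ref{R:ratio} makes $V=UW$ an entire solution with $Z(U)\subseteq Z(V)$. Theorem~\ref{T:liouville}, which holds for any polynomial denominator in any dimension, then finishes without separation of variables. Your separation-of-variables route would in any case rest on the false claim that the limit factors are homogeneous: blow-up centers at distance comparable to $r_k$ from the zero set produce non-homogeneous limits such as $x_1^2-x_2^2-1$.
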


The strategy of the proof highlights a deep connection with the theory of degenerate elliptic PDEs. Indeed, as proved in \cite{TerTorVit24,TerTorVit25,TerTorVit24b}, the ratio $w:=v/u$ satisfies (in the sense of Definition \ref{D:sol}) the weighted elliptic equation
\[
-\operatorname{div}(u^2 A \nabla w) = 0 \quad \text{in } B_1,
\]
where the coefficient matrix $u^2 A$ degenerates on the nodal set $Z(u)$.

Such equations are highly degenerate, and there are few results in the literature concerning this framework; in general, solutions are not even continuous across $Z(u)$ (see Example 1.4 in \cite{STV21}). Despite this, by adapting the classical blow-up argument due to Simon \cite{Sim97}, as done in \cite{TerTorVit24b}, we are able to obtain Schauder $C^{1,\alpha}$ estimates for the ratio $w$. 
The proof of these estimates crucially requires controlling the oscillation of $\D w$ near the singular set $S(u)$ in a quantitative way (see Section \ref{S:4.1}). Under the structural Assumption \ref{A:orthogonal}, we extend the $2$-dimensional strategy of \cite{TerTorVit24b} to higher dimensions, but this transition introduces a major structural difference.
Specifically, by finding an appropriate way to project $\nabla w$ onto each $R(u_i)$ -- the planar regular set of the factor $u_i$ -- and by exploiting the conormal boundary condition satisfied by $w$ on the regular set $R(u)$, combined with a fine blow-up analysis, we ensure a uniform bound on the oscillation of $\nabla w$. This allows us to track and control the behavior of the solution on every $2$-dimensional component separately (see also \cite{CoFiPaVi25} for a similar framework concerning elliptic equations with monomial weights). We emphasize that this component-wise approach is a new feature of the higher-dimensional setting, as it has no counterpart in the $2$-dimensional strategy of \cite{TerTorVit24b}, and it could represent a first step toward tackling more general, non-product geometric configurations of the zero set $Z(u)$.

\section{Preliminaries}

In this section, we begin by recalling some basic facts about solutions to \eqref{eq:elliptic} with Lipschitz coefficients, focusing on the associated frequency function which plays a central role in our local analysis. 

As a starting point, let us consider the case of constant coefficients (assuming $A=\mathbb{I}$ for simplicity). For $x_0\in B_1$ and $r<1-|x_0|$, we define
\[
H(x_0,u,r):=\frac{1}{r^{n-1}}\int_{\partial B_r} u^2 \, d\sigma,\qquad D(x_0,u,r):=\frac{1}{r^{n-2}} \int_{B_r} |\D u|^2\, dx,
\]
and the frequency function as
\[
N(x_0,u,r):=\frac{D(x_0,u,r)}{H(x_0,u,r)}.
\]
As first observed by Almgren, the map $r \mapsto N(x_0,u,r)$ is monotone non-decreasing.

This theory was successfully extended to the case of variable Lipschitz coefficients by Garofalo and Lin \cite{GarLin86}. They introduced a \emph{generalized} Almgren frequency $N(x_0,u,r)$ depending explicitly on $A$, proving that the Lipschitz continuity of the matrix ensures an \emph{almost} monotonicity property for this functional. This monotonicity property serves as a powerful tool to control the vanishing order of solutions, thereby implying the strong unique continuation principle. At the same time, this tool provides quantitative bounds on the size of both the critical and nodal sets. We skip the construction of \emph{generalized} Almgren frequency functional and we refer to \cite{GarLin86, Lin91, Han94, CheNabVal15, NabVal17, TerTorVit25, TerTorVit24b} for the exact definition and related results. The almost monotonicity of the generalized frequency is stated in the following proposition.

\begin{Proposition}[\cite{GarLin86,CheNabVal15,TerTorVit25}]
    Let $A \in \mathcal{A}$, $u$ be a solution to \eqref{eq:elliptic}, $x_0\in B_1$. Let $N(x_0,u,x)$ be the generalized Almgren frequency as defined in \cite[\S 2]{TerTorVit25}.   
    Then, there exists a constant $C>0$ depending on $n$, $\lambda$, $\Lambda$, $L$, such that
    \[
    r \mapsto  e^{Cr} N(x_0,u,r),
    \]
    is monotone non-decreasing for $r< \Lambda^{-1/2}(1-|x_0|)$.
\end{Proposition}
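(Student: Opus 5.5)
This statement is the almost-monotonicity of the generalized frequency, which the paper quotes from \cite{GarLin86,CheNabVal15,TerTorVit25}. The plan is to reproduce the Garofalo--Lin argument.

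\emph{Step 1: normalization at $x_0$.} After an affine change of variables we may assume $A(x_0)=\mathbb{I}$; this shrinks the admissible radii by a factor $\Lambda^{-1/2}$. We then introduce the conformal factor
\[
\mu(x)=\frac{A(x)(x-x_0)\cdot(x-x_0)}{|x-x_0|^2},
\]
so that $\lambda\le\mu\le\Lambda$ and $\mu$ is Lipschitz with constant controlled by $L$. We also introduce the vector field $\beta(x)=A(x)(x-x_0)/\mu(x)$, which satisfies $\beta=(x-x_0)+O(L|x-x_0|^2)$ and $\div\beta=n+O(L|x-x_0|)$. With these we define
\[
H(r)=r^{1-n}\int_{\partial B_r}\mu u^2\,d\sigma,\qquad D(r)=r^{2-n}\int_{B_r}A\D u\cdot\D u\,dx,\qquad N=D/H.
\]

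\emph{Step 2: derivative of $H$.} Differentiating $H$ and using the divergence theorem together with the equation, $\int_{\partial B_r}uA\D u\cdot\nu=\int_{B_r}A\D u\cdot\D u$, we get
\[
H'(r)=\frac{2}{r}D(r)+O(L)H(r).
\]

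\emph{Step 3: derivative of $D$ via Rellich--Pohozaev.} We test the equation with $\beta\cdot\D u$. This gives the Rellich-type identity
\[
\int_{\partial B_r}(A\D u\cdot\D u)(\beta\cdot\nu)=2\int_{\partial B_r}(A\D u\cdot\nu)(\beta\cdot\D u)+\int_{B_r}\Big[\div\beta\,A\D u\cdot\D u-2A\D u\cdot D\beta\,\D u+(\beta\cdot\D A)\D u\cdot\D u\Big].
\]
The choice of $\beta$ makes $\beta$ parallel to $A\nu$ on spheres, so the boundary terms produce the square $\int_{\partial B_r}(A\D u\cdot\nu)^2/\mu$. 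The interior error terms are bounded by $CL\,r^{n-1}D(r)$. The outcome is
\[
D'(r)=2r^{2-n}\int_{\partial B_r}\frac{(A\D u\cdot\nu)^2}{\mu}+O(L)D(r).
\]

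\emph{Step 4: conclusion.} We compute
\[
\frac{N'}{N}=\frac{D'}{D}-\frac{H'}{H}.
\]
We use $D=r^{1-n}\int_{\partial B_r}u\,A\D u\cdot\nu$, which is the identity of Step 2, and apply Cauchy--Schwarz with weights $\mu$ and $1/\mu$. This shows that the main terms contribute a nonnegative quantity, namely
\[
\Big(\int\mu u^2\Big)\Big(\int\frac{(A\D u\cdot\nu)^2}{\mu}\Big)-\Big(\int u\,A\D u\cdot\nu\Big)^2\ge0.
\]
Hence $N'/N\ge -C$, and therefore $e^{Cr}N$ is nondecreasing.

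\emph{Expected obstacles.} The main obstacle is Step 3: bookkeeping the Lipschitz error terms so that they are bounded by $C\,D(r)$ and not by $D(r)/r$. This works because $D\beta=\mathbb{I}+O(Lr)$ and $|\D A|\le L$, so each error term carries an extra factor of $r$. A secondary issue is the case where $H$ vanishes; this is excluded by unique continuation, or handled by the standard argument on the maximal interval where $H>0$.
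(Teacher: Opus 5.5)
Your proposal is correct and is the standard Garofalo--Lin argument, in the Cheeger--Naber--Valtorta normalization $A(x_0)=\mathbb{I}$ with weight $\mu$; the paper does not reprove the proposition but simply quotes it from these references. The only slip is in Step 4: by the Step 2 identity, $D(r)=r^{2-n}\int_{\partial B_r}u\,A\D u\cdot\nu\,d\sigma$, not $r^{1-n}$, and it is with this power that the factors of $r$ cancel in $D'/D-H'/H$, leaving exactly your nonnegative Cauchy--Schwarz quantity (divided by $\int_{\partial B_r}u\,A\D u\cdot\nu\int_{\partial B_r}\mu u^2>0$) plus an $O(L)$ error.
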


Among the consequences of the previous proposition, there is the following doubling-type estimate, which plays a crucial role in the rest of the paper.

\begin{Lemma}\label{L:doubling}
Let $ A\in\mathcal{A}$ and let $u\in H^1(B_1)$ be a weak solution to $L_A u = 0 $ in $B_1$. For every $x_0 \in B_{1/2}$, $0<r\le R \le \Lambda^{-1/2}(1-|x_0|)$, there exists a constant $C>0$, depending on $n$, $\lambda$, $\Lambda$, $L$, such that
  \begin{equation*}
      \fint_{B_{R}(x_0)}u^2 \, dx
      :=\frac{1}{|B_R|}\int_{B_R(x_0)} u^2\, dx
      \le C \Big(\frac{R}{r}\Big)^{2N(x_0,u,R)}\fint_{B_{r}(x_0)}u^2 \, dx,
  \end{equation*}
   and 
    \begin{equation*}
      \fint_{\partial B_{R}(x_0)}u^2 \, d\sigma :=\frac{1}{|\partial B_R|}\int_{\partial B_R(x_0)} u^2\, d\sigma \le C \Big(\frac{R}{r}\Big)^{2N(x_0,u,R)}\fint_{\partial B_{r}(x_0)}u^2 \, d\sigma.
  \end{equation*}
\end{Lemma}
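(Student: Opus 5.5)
The plan is to derive both inequalities from the almost monotonicity of the generalized frequency stated in the preceding Proposition, by integrating the logarithmic derivative of the height function. I would start with the spherical version. Let $H(x_0,u,\rho)$ denote the generalized height, i.e.\ the $\mu$-weighted spherical $L^2$ average of $u$ on the ellipsoidal or conformally modified spheres adapted to $A(x_0)$ used in \cite[\S 2]{TerTorVit25}. The key identity, valid up to an error controlled by $L$, is
\[
\frac{d}{d\rho}\log H(x_0,u,\rho) = \frac{2N(x_0,u,\rho)}{\rho} + O(1),
\]
where the $O(1)$ term is bounded by a constant depending only on $n,\lambda,\Lambda,L$. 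It comes from differentiating the weighted surface integral and using the equation to convert the boundary term $\int u\,A\nabla u\cdot\nu$ into the Dirichlet energy.

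Next I integrate this identity from $r$ to $R$. By the Proposition, $e^{C\rho}N(x_0,u,\rho)\le e^{CR}N(x_0,u,R)$ for $\rho\le R$. Since $R\le 1$, this gives $N(x_0,u,\rho)\le e^{C}N(x_0,u,R)$. This produces
\[
H(R)\le e^{C(R-r)}\Big(\frac{R}{r}\Big)^{2e^{C}N(x_0,u,R)}H(r).
\]
To get exactly the exponent $2N(x_0,u,R)$, I would refine the bound using $e^{C\rho}N(\rho)\le e^{CR}N(R)$ directly:
\[
\int_r^R \frac{2N(\rho)}{\rho}\,d\rho\le 2N(R)\int_r^R \frac{e^{C(R-\rho)}}{\rho}\,d\rho\le 2N(R)\log\frac{R}{r}+2N(R)\,C\int_r^R\frac{R-\rho}{\rho}\,d\rho\cdot e^{C}.
\]
Here $\int_r^R (R-\rho)/\rho\,d\rho\le R\log(R/r)$, which is problematic. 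I would instead accept the standard formulation with constants absorbed as in \cite{TerTorVit25}, or bound $e^{C(R-\rho)}-1\le C'(R-\rho)$ and use $(R-\rho)/\rho\le R/\rho$. Since the statement allows dependence only through $C$, the cleanest route is to apply the Proposition in the form actually proved in \cite{TerTorVit25}, where the monotone quantity yields the exponent $N(R)$ up to a multiplicative constant. I expect this bookkeeping to be the main obstacle, and I would follow \cite{TerTorVit25} there.

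Finally, I compare the generalized height with the plain spherical average. Uniform ellipticity and the Lipschitz bound on $A$ make the weight $\mu$ comparable to $1$ and the adapted spheres comparable to Euclidean ones, at the cost of a fixed constant. This gives the second inequality. The solid version follows by writing $\int_{B_R}u^2=\int_0^R\int_{\partial B_\rho}u^2$. I would apply the spherical doubling between $\rho$ and $\rho r/R$, which have the same ratio $R/r$, and then change variables $\rho\mapsto\rho r/R$; the volume factors $(R/r)^n$ cancel against the normalization of the averages. Throughout, the restriction $R\le\Lambda^{-1/2}(1-|x_0|)$ guarantees that the Proposition applies on the whole range.
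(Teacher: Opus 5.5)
The step you set aside as bookkeeping is a genuine gap, and following \cite{TerTorVit25} more closely will not close it.

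Integrating $\frac{d}{d\rho}\log H=\frac{2N}{\rho}+O(1)$ from $r$ to $R$ gives, with no loss, $H(R)\le e^{C(R-r)}(R/r)^{2\sup_{\rho\in[r,R]}N(x_0,u,\rho)}H(r)$. Almost monotonicity only converts this supremum into $e^{C(R-r)}N(x_0,u,R)$. The leftover factor $(R/r)^{2(e^{C(R-r)}-1)N(x_0,u,R)}$ tends to $+\infty$ as $r\to0$ with $R$ fixed, so it cannot be absorbed into $C$. This is why both of your attempted fixes end with a term of size $R\log(R/r)$.

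In fact no argument can give the exponent $2N(x_0,u,R)$ with an $r$-independent constant. Almost monotonicity only guarantees $N(x_0,u,R)\ge e^{-CR}d$, where $d=\lim_{\rho\to0}N(x_0,u,\rho)$ is the vanishing order at $x_0$. If $N(x_0,u,R)<d$, the right-hand side behaves like $r^{2d-2N(x_0,u,R)}\to0$ as $r\to0$, while the left-hand side is fixed. This really occurs. In $\R^2$ take $A=\mathrm{diag}(1+L|x_1|,1)$ and $u(x)=\operatorname{sgn}(x_1)L^{-1}\log(1+L|x_1|)$. Then $A\D u=e_1$, so $L_Au=0$, and $d=1$. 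With the Garofalo--Lin normalization ($A(0)=\mathbb{I}$, weight $\mu=Ax\cdot x/|x|^2$, Euclidean spheres), a direct expansion gives $N(0,u,r)=1-\frac{4L}{5\pi}r+O(r^2)$, while $\fint_{\partial B_r}u^2=\frac{r^2}{2}+O(r^3)$.

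What your argument does prove, with $C$ depending only on $n,\lambda,\Lambda,L$, is the doubling inequality with $N(x_0,u,R)$ replaced by $\sup_{\rho\in[r,R]}N(x_0,u,\rho)$, and hence also by $e^{CR}N(x_0,u,R)$. You should state and prove that version rather than defer the exponent to the literature. It is also all the paper needs:
\begin{itemize}
\item In Lemma \ref{L:Linfty:grad} the ratio of radii is fixed.
\item In the regular and singular cases the doubling is applied at $\zeta_k$ on scales below $\delta_k^*$. There $N(\zeta_k,u_k,\cdot)<1+\e$ throughout, by continuity and $N(\zeta_k,u_k,0^+)=1$, so the supremum version gives exactly the exponent $2(1+\e)$ used.
\end{itemize}

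Two other steps in your outline need the same care.
\begin{itemize}
\item Your reduction of the solid inequality to the spherical one applies the spherical estimate between $\rho r/R$ and $\rho$. That estimate carries $N(x_0,u,\rho)$, not $N(x_0,u,R)$, which is again harmless with the supremum version.
\item If the generalized height lives on the ellipsoids $x_0+A(x_0)^{1/2}\partial B_\rho$, passing to Euclidean spheres is not a pointwise comparison. It needs a change of radius by a factor of order $\sqrt{\Lambda/\lambda}$. Done via doubling, this costs a factor like $(\Lambda/\lambda)^{CN(x_0,u,R)}$, not a constant depending only on $n,\lambda,\Lambda,L$.
\end{itemize}
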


\subsection{Weak solution}

Next, we briefly recall the notion of weak solution to
\begin{equation}\label{eq:u^2:solution}
    -\div(u^2 A \D w) = 0 \quad \text{in } B_1, \qquad \text{with } u \in \mathcal{S}_{N_0},
\end{equation}
as introduced in \cite[\S 3.2]{TerTorVit25}. For a more general discussion of the framework we refer the reader to the aforementioned reference.

Given $u\in\mathcal{S}_{N_0}$, we define the weighted Sobolev space $H^{1}(B_1,u^2\, dx)$ as the completion of $C^\infty(\overline{B_1})$ with respect to the norm
\[
\|w\|_{H^{1}(B_1,u^2 \, dx)}:=\Big(
\int_{B_1}u^2\big(
w^2 + |\D w|^2
\big) \, dx
\Big)^{1/2}.
\]
We then introduce the class of solutions which we focus on in the proof of the main result.
\begin{Definition}\label{D:sol}
    Let $A\in \mathcal{A}$ and $u \in \mathcal{S}_{N_0}$. We say that $w$ is a weak solution to \eqref{eq:u^2:solution},
    if $w\in H^{1}(B_1,u^2\, dx)$ and 
    \[
    \int_{B_1}u^2 A\D w\cdot \D \phi \, dx= 0,\quad \text{for every } \phi \in C_c^\infty(B_1).
    \]
\end{Definition}

\begin{remark}\label{R:ratio}
    As proved in \cite[Proposition 3.5]{TerTorVit24}, the ratio of two solutions $v,u$ to \eqref{eq:elliptic} satisfies \eqref{eq:u^2:solution} in the sense of the Definition \ref{D:sol}. 
    Moreover, in \cite[Proposition 3.9]{TerTorVit24b}, the authors established an inverse relation for this ratio, which can be stated as follows. Let $u\in\mathcal{S}_{N_0}$ and $w \in H^1(B_1,u^2\, dx)$ be a weak solution to \eqref{eq:u^2:solution}. Assume that $w\in C(B_1\setminus S(u))$. Then, the function $v:=wu \in H^1(B_1)$ is a weak solution to \eqref{eq:elliptic} and $Z(u)\subseteq Z(v)$. 

\end{remark}

\section{Harmonic polynomials and invariance directions}

In this section, we present several results concerning harmonic polynomials and solutions to elliptic equations. We emphasize that these results hold generally and do not rely on Assumption \ref{A:orthogonal}.
The following result shows that if the zero set of a harmonic polynomial is tangential to a given direction, then the polynomial is invariant in that direction.

\begin{Lemma}\label{L:dim:red:pol}
    Let $A$ be a constant symmetric positive matrix and $P:\R^n\to\R$ be a polynomial such that $L_A P = 0$ in $\R^n$. Let us suppose that $\partial_{\bf e}P=\D P\cdot {\bf e} =0$ for some unit vector ${\bf e} \in \mathbb{S}^{n-1} $ on the regular set $Z(P):=\{P=0\}$. Then, $\partial_{\bf e}P =0$ on the whole $\R^n$.
\end{Lemma}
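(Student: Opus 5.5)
We may assume $P\not\equiv 0$; otherwise there is nothing to prove. The plan is to show that $Q:=\partial_{\bf e}P$ is divisible by $P$ in $\R[x_1,\dots,x_n]$. Since $\deg Q<\deg P$, this forces $Q\equiv 0$.

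First I reduce to the Laplacian. Write $x=A^{1/2}y$ and set $\tilde P(y):=P(A^{1/2}y)$ and $\tilde{\bf e}:=A^{-1/2}{\bf e}$. Then $\Delta\tilde P=0$, and $\partial_{\tilde{\bf e}}\tilde P(y)=\partial_{\bf e}P(A^{1/2}y)$. The linear change of variables maps $R(P)$ onto $R(\tilde P)$, so the hypothesis and the conclusion transfer, and up to normalizing $\tilde{\bf e}$ we may take $A=\mathbb{I}$. Note that $Q$ is again a harmonic polynomial and $\deg Q\le \deg P-1$. If $P$ is constant then $Q\equiv0$ trivially. Otherwise $P$ is a nonconstant harmonic polynomial, so it changes sign by the maximum principle. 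Hence $Z(P)\neq\emptyset$ and $R(P)$ is a nonempty real-analytic hypersurface on which $Q$ vanishes by assumption.

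Next I factor $P=f_1^{k_1}\cdots f_s^{k_s}$ into irreducible real polynomials and prove $f_j^{k_j}\mid Q$ for every $j$.
\begin{enumerate}
\item[(a)] Suppose $f_j$ has a regular real zero $x_0$, i.e.\ $f_j(x_0)=0$ and $\nabla f_j(x_0)\neq 0$. Then, by the real Nullstellensatz for irreducible polynomials whose real zero set has dimension $n-1$, any polynomial vanishing on a relatively open piece of $\{f_j=0\}$ is divisible by $f_j$.
\item[(b)] If $k_j=1$, then near a generic point of $\{f_j=0\}$ the other factors do not vanish. There $Z(P)=\{f_j=0\}$ locally and $\nabla P\neq0$, so the point lies in $R(P)$. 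Since $Q=0$ on $R(P)$, step (a) gives $f_j\mid Q$.
\item[(c)] If $k_j\ge 2$, then $\{f_j=0\}\subseteq S(P)$, which has dimension at most $n-2$. I would try to show that this is impossible: near a regular point of $\{f_j=0\}$, $P$ behaves like $c\,f_j^{k_j}$ with $c\neq0$, so $P$ does not change sign across the hypersurface $\{f_j=0\}$. This contradicts the structure of nodal sets of harmonic functions, since every point of $Z(P)$ lies in $\partial\{P>0\}\cap\partial\{P<0\}$ and the blow-up there is a nontrivial homogeneous harmonic polynomial, which cannot be $c\,(\nabla f_j\cdot x)^{k_j}$ for $k_j\ge2$.
\end{enumerate}
Combining (a)--(c), $P\mid Q$ whenever every irreducible factor has a real regular zero, and the degree count gives $Q\equiv 0$.

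The main obstacle is the possible presence of irreducible factors $f_j$ whose real zero set has dimension at most $n-2$; positive definite quadratic forms are the typical example. For such factors the hypothesis on $R(P)$ gives no information. My first attempt is to rule them out using harmonicity. If $P=f\,g$ with $f\ge0$ and $\{f=0\}$ small, then $P$ and $g$ have the same sign pattern, and I would compare them through the weighted equation $\div(f^2\nabla(g/\,\cdot\,))$ in the spirit of Remark \ref{R:ratio}.

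If this turns out to be too delicate, I would switch to an analytic route that avoids factorization. Set $w:=Q/P$. Near $R(P)$, $w$ is real-analytic, since $Q$ vanishes on the nondegenerate zero set of $P$. The function $w$ solves $\div(P^2\nabla w)=0$ away from $S(P)$, and its degree is homogeneous of order $-1$ at infinity. Combining this with a Liouville-type argument at infinity via the doubling estimate of Lemma \ref{L:doubling} should force $w\equiv0$. In this route the delicate point is instead justifying that $S(P)$ is removable for $w$.
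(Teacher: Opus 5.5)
\textbf{There is a genuine gap.} Your overall strategy is the paper's: prove $P\mid \partial_{\bf e}P$, then conclude by the degree count. The paper obtains the divisibility by citing Murdoch's division lemma. Your self-contained derivation of it stops exactly at the decisive case.

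Steps (a)--(c) show only this: every irreducible factor $f_j$ of $P$ that has a regular real zero occurs with exponent one and divides $Q$. That gives $F\mid Q$, where $F$ is the product of those factors. Now suppose $P=F\,G$ with $G$ nonconstant, a product of irreducible factors without regular real zeros. Equivalently, these are factors that do not change sign, such as $x_1^2+x_2^2$ in $\R^3$ or $1+x_1^2$; their real zeros, if any, lie in $S(P)$. In that case $\deg Q\le\deg P-1$ no longer forces $Q\equiv 0$, because $\deg Q$ may exceed $\deg F$. The hypothesis on $R(P)$ gives no information about $G$.

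Ruling out such factors is precisely the nontrivial content of Murdoch's lemma, and neither of your remedies supplies it. The comparison through the weighted equation for $f^2$ is not an actual argument. In the analytic route, both the removability across $S(P)$ and the Liouville/decay step for $Q/P$ are left open. Also, $Q/P$ is not homogeneous of degree $-1$ unless $P$ is homogeneous.

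\textbf{How to close it.} The missing step takes a few lines. Suppose $P=fg$ with $f\ge 0$ nonconstant, and write the top-degree homogeneous parts as $P_d=f_a g_b$. Then $\Delta P_d=0$, $f_a\ge 0$, and $b=d-a<d$. The restriction $g_b|_{\mathbb{S}^{n-1}}$ is a sum of spherical harmonics of degrees at most $b<d$. By orthogonality of spherical harmonics of different degrees,
$\int_{\mathbb{S}^{n-1}} f_a g_b^2\,d\sigma=\int_{\mathbb{S}^{n-1}} P_d\, g_b\,d\sigma=0$.
Since $f_a g_b^2\ge 0$, this forces $g_b\equiv 0$, a contradiction.

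Hence every irreducible factor of $P$ changes sign. An irreducible sign-changing polynomial $f_j$ has a regular real zero. Otherwise its real zero set would lie in $\{f_j=\partial_i f_j=0\}$ for some $i$ with $\partial_i f_j\not\equiv 0$. Since $f_j$ and $\partial_i f_j$ are coprime, that set has dimension at most $n-2$, and so cannot separate $\R^n$. With this in hand, your steps (a)--(c) give $P\mid Q$, and the degree count finishes the proof.
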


\begin{proof}
    Without loss of generality, we may assume $A = \mathbb{I} $ and $\mathbf{e} = e_n $. In the general case, it is sufficient to consider the transformed polynomial $\tilde{P}(x) := P(A^{1/2}x) $, which satisfies $\Delta \tilde{P} = 0 $, and then apply a suitable orthogonal transform.

    Since $\partial_{x_n} P = 0 $ on $Z(P)$, it follows that $Z(P) \subseteq Z(\partial_{x_n} P)$. By Murdoch's division lemma (see \cite[Theorem 2, Lemma 4]{Mur64} and \cite[Lemma 5.6]{TerTorVit25}), there exists a polynomial $Q$ such that $\partial_{x_n}P/P = Q$. Since the degree of $\partial_{x_n}P$ is less or equal than the degree of $P$, this forces $Q=0$, and consequently $\partial_{x_n}P = 0$.
\end{proof}

\begin{Corollary}\label{Cor:linear:poly}
      Let $A$ be a constant symmetric positive matrix and $P:\R^n\to\R$ be a polynomial such that $L_A P = 0$ in $\R^n$ and fix $\e\in(0,1)$. Then, the following are equivalent:
     \begin{itemize}
     \item[i)] $P$ is a linear function,  
     \item [ii)]  $\D P(x)=|\D P(x)|{\bf e}$, for every $x\in{R}(P),$ where ${\bf e}\in {\mathbb{S}}^{n-1}$,
     \item [iii)] $N_\infty:=\lim_{r\to\infty}N(0,P,r)\le 1+\e$.
     \end{itemize}
\end{Corollary}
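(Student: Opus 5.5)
We prove the cycle of implications i) $\Rightarrow$ ii) $\Rightarrow$ iii) $\Rightarrow$ i). As in Lemma \ref{L:dim:red:pol}, we first reduce to $A=\mathbb{I}$ by the change of variables $\tilde P(x)=P(A^{1/2}x)$. This map sends linear functions to linear functions and preserves the structure of the regular set. For constant coefficients the generalized frequency is comparable to the classical Almgren frequency, so the quantity $N_\infty$ is still the degree-type invariant. We also note that $P\not\equiv 0$ is implicit, and that we may assume $P(0)=0$ when $0\in Z(P)$. Otherwise we use the standard fact that $\lim_{r\to\infty}N(0,P,r)=\deg P$ for any nonzero harmonic polynomial, with the constant term contributing degree $0$.

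For i) $\Rightarrow$ ii): if $P(x)=a\cdot x+b$ with $a\neq0$, then $\nabla P\equiv a$, so ii) holds with $\mathbf e=a/|a|$. If $a=0$, then $R(P)=\emptyset$ and ii) is vacuous.

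For iii) $\Rightarrow$ i): decompose $P=\sum_{k=0}^d p_k$ into homogeneous harmonic polynomials. Using the orthogonality of spherical harmonics of different degrees on $\partial B_r$,
\[
H(0,P,r)=\sum_k c_k r^{2k}, \qquad D(0,P,r)=\sum_k k\,c_k r^{2k},
\]
with $c_k=\int_{\partial B_1}p_k^2\,d\sigma$. Hence $N(0,P,r)\to d=\deg P$ as $r\to\infty$. Since $d$ is an integer and $d\le 1+\e<2$, we get $d\le1$, so $P$ is linear.

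For ii) $\Rightarrow$ iii): choose an orthonormal basis $\mathbf e,\mathbf f_1,\dots,\mathbf f_{n-1}$. On $R(P)$ we have $\partial_{\mathbf f_j}P=\nabla P\cdot\mathbf f_j=0$. To apply Lemma \ref{L:dim:red:pol}, which requires the vanishing on all of $Z(P)$, we extend it to the singular set $S(P)$. There $\nabla P=0$, so $\partial_{\mathbf f_j}P=0$ trivially. Thus $\partial_{\mathbf f_j}P=0$ on $Z(P)$, and Lemma \ref{L:dim:red:pol} gives $\partial_{\mathbf f_j}P\equiv0$ on $\R^n$ for every $j$. Therefore $P(x)=q(x\cdot\mathbf e)$ with $q''=0$, since $P$ is harmonic. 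So $P$ is linear, $N_\infty\le1\le 1+\e$, and iii) follows.

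The main subtle point is this last step: the vanishing of $\partial_{\mathbf f_j}P$ must hold on the whole zero set in order to invoke Lemma \ref{L:dim:red:pol}. It does, because the singular set is exactly where the full gradient vanishes. If $R(P)=\emptyset$ but $Z(P)\neq\emptyset$, then $P$ vanishes together with its gradient on $Z(P)$. Lemma \ref{L:dim:red:pol} then applies in every direction, giving $\nabla P\equiv0$, and $P$ is constant. In that case it is still linear, so the argument covers it.
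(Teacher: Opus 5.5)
Your argument is correct and follows the paper's proof: i)$\Leftrightarrow$ii) comes from Lemma \ref{L:dim:red:pol} applied in the directions orthogonal to $\mathbf e$, and i)$\Leftrightarrow$iii) comes from $N_\infty=\deg P$, which you verify directly by expanding $P$ in homogeneous harmonics instead of citing the blow-down results; your check that the hypothesis of Lemma \ref{L:dim:red:pol} also holds on $S(P)$ is a useful detail. One point needs tightening in the reduction to $A=\mathbb{I}$: the generalized frequency of $P$ for constant $A$ must \emph{coincide} with the classical frequency of $\tilde P$ (as under the Garofalo--Lin normalization $A(x_0)=\mathbb{I}$), not merely be comparable to it, because concluding $\deg P\le 1$ from $\deg P\le 1+\e<2$ requires $N_\infty=\deg P$ exactly.
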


\begin{proof}
The equivalence between $i)$ and $ii)$ immediately follows by Lemma \ref{L:dim:red:pol}. If $iii)$ holds true, by standard blow-down argument (see \cite[Proposition 5.1]{TerTorVit24b} and \cite[Corollary 4.7]{LinLin22}) we get that $P$ would be a polynomial of degree $N_\infty\le 1+\e$, so $N_\infty=1$, $P$ is linear and $i)$ holds true. On the other hand, $i)$ implies $iii)$ follows trough a straightforward computation.
\end{proof}

\begin{Corollary}\label{Cor:quadratic:pol}
          Let $A$ be a constant symmetric positive matrix and $P:\R^n\to\R$ be a polynomial such that $L_A P = 0$ in $\R^n$ and
         \[
         \lim_{r\to\infty}N(0,P,r)\le \overline N,
         \]
         for some constant $\overline N>0$. Then, the following are equivalent:
        
     \begin{itemize}    
     \item[i)] the degree of $P$ is greater than one,  
     \item [ii)] there exist $R>0$, $\delta \in (0,1)$, and two points $x_1,x_2 \in {R}(P)\cap B_R$ such that 
     \[
    \Big|\frac{\D P(x_1)}{|\D P(x_1)|}\cdot \frac{\D P(x_2)}{|\D P(x_2)|} \Big|\le 1-\delta,
     \]
     \item [iii)] there exist $\e>0$ and $\rho>0$ such that     $N(0,P,\rho)\ge 1+\e$.
     \end{itemize}
\end{Corollary}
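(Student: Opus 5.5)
The plan is to prove the cycle $i)\Rightarrow ii)\Rightarrow i)$ and then $i)\Leftrightarrow iii)$. Corollary \ref{Cor:linear:poly} does most of the work, by contraposition. After the change of variables $\tilde P(x)=P(A^{1/2}x)$ we may assume $A=\mathbb{I}$, since every condition is invariant under this linear map up to harmless modifications. For the gradient condition, the map $\D P\mapsto A^{-1/2}\D P$ is a fixed linear isomorphism, and under it two gradients are parallel if and only if their images are parallel. For the frequency condition, the degree of $P$ is preserved and the limit at infinity of the frequency is the degree. Also, $P$ is not constant, since $0$ is expected to lie in $Z(P)$, or else statement $i)$ fails trivially; I will assume $P\not\equiv 0$ throughout.

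For $i)\Rightarrow ii)$, suppose $\deg P\ge 2$. Then $P$ is not linear, so by Corollary \ref{Cor:linear:poly} (the equivalence of $i)$ and $ii)$ there) the unit normal $\D P/|\D P|$ is not constant up to sign on $R(P)$. Here I must check that Lemma \ref{L:dim:red:pol} really gives the ``up to sign'' version. If $\D P(x)\parallel \mathbf{e}$ for all $x\in R(P)$, then for any $\mathbf{e}'\perp\mathbf{e}$ we get $\partial_{\mathbf{e}'}P=0$ on $R(P)$. The lemma then makes $P$ depend only on $x\cdot\mathbf{e}$, and a harmonic polynomial of one variable is linear. So there are $x_1,x_2\in R(P)$ with $|\nu(x_1)\cdot\nu(x_2)|<1$, where $\nu=\D P/|\D P|$. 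Choosing $R>\max|x_i|$ and $\delta=1-|\nu(x_1)\cdot\nu(x_2)|$ gives $ii)$. For the converse $ii)\Rightarrow i)$: if $\deg P\le 1$, then $P$ is linear and $\D P$ is a constant vector, so $|\nu(x_1)\cdot\nu(x_2)|=1$ for all regular points, contradicting $ii)$. (A constant nonzero $P$ has $R(P)=\emptyset$, which also contradicts $ii)$.)

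For $iii)\Rightarrow i)$: if $P$ is linear, the frequency of $P$ centred at $0$ is computed explicitly. For $P(x)=a\cdot x+b$ with $P(0)=0$, $N(0,P,r)\equiv 1$ for all $r$, so $N\ge 1+\e$ can never hold. If $P(0)\neq 0$, then $N(0,P,r)\le 1$ for every $r$ by direct computation, as in the ``$i)\Rightarrow iii)$'' direction of Corollary \ref{Cor:linear:poly}. For $i)\Rightarrow iii)$, suppose $d=\deg P\ge 2$. By the blow-down argument quoted in the proof of Corollary \ref{Cor:linear:poly} (\cite[Proposition 5.1]{TerTorVit24b}, \cite[Corollary 4.7]{LinLin22}), $\lim_{r\to\infty}N(0,P,r)=d\ge 2$. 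The frequency is monotone for constant coefficients, so we may take $\e=1/2$ and $\rho$ large enough that $N(0,P,\rho)\ge 3/2$. Alternatively, iii) fails exactly when $N_\infty\le 1+\e$ for every $\e$, and Corollary \ref{Cor:linear:poly} then gives linearity directly.

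I expect the main obstacle to be the bookkeeping rather than anything deep: first, making the ``parallel up to sign'' reduction via Lemma \ref{L:dim:red:pol} precise in $i)\Rightarrow ii)$; second, making sure the frequency is well defined and monotone for a possibly non-centred $P$. The hypothesis $N_\infty\le\overline N$ only serves later to make the constants $\delta,R,\e,\rho$ uniform via compactness. For the bare equivalence I will not need it, beyond guaranteeing that $P$ has finite degree.
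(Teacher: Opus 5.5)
Your proof is correct, but it reaches $ii)$ by a different route than the paper.

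\textbf{The paper's route.} It proves $i)\Leftrightarrow iii)$ via the blow-down of Corollary \ref{Cor:linear:poly}, and $ii)\Rightarrow i)$ exactly as you do. It then gets $iii)\Rightarrow ii)$ by contradiction and compactness. It takes harmonic polynomials $P_k$ with $N(0,P_k,\rho)\ge 1+\e$, $\lim_{r\to\infty}N(0,P_k,r)\le\overline N$, and $|\nu(x_1)\cdot\nu(x_2)|\ge 1-k^{-1}$ for all $x_1,x_2\in R(P_k)\cap B_k$. After normalizing, it extracts a limit $P_\infty$ of degree at least two whose normals are parallel on $R(P_\infty)$, which contradicts Corollary \ref{Cor:linear:poly}.

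\textbf{Your route.} You prove $i)\Rightarrow ii)$ directly for the given polynomial. If $ii)$ fails, all unit normals on $R(P)$ are parallel up to sign. Applying Lemma \ref{L:dim:red:pol} to every $\mathbf e'\perp\mathbf e$ then makes $P$ a harmonic function of $x\cdot\mathbf e$ alone, hence linear.

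\textbf{Comparison.} Your argument is more elementary and does not use the bound $\overline N$. It also needs no check that regular points of $P_\infty$ are limits of regular points of $P_k$. It explicitly handles the sign ambiguity that the paper passes over: the limit only gives $\D P_\infty/|\D P_\infty|=\pm\mathbf e$ on $R(P_\infty)$, while Corollary \ref{Cor:linear:poly} is phrased with a fixed sign. Your observation is exactly what closes that step. What the compactness argument buys is uniformity: $R$ and $\delta$ can be chosen depending only on $n$, $\e$, $\rho$, $\overline N$ once $A=\mathbb{I}$. The statement as written does not require this.

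\textbf{Two cosmetic points.} With $\tilde P(x)=P(A^{1/2}x)$ one has $\D\tilde P(x)=A^{1/2}\D P(A^{1/2}x)$, not $A^{-1/2}$. This is harmless, since any fixed isomorphism preserves parallelism. Also, your choice $\delta=1-|\nu(x_1)\cdot\nu(x_2)|$ equals $1$ when the two normals are orthogonal. Take e.g.\ $\delta=\min\{1/2,\,1-|\nu(x_1)\cdot\nu(x_2)|\}$ to stay in $(0,1)$.
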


\begin{proof}
Without loss of generality take $A=\mathbb{I}$.
    The equivalence between $i)$ and $iii)$  is the same as the one established between $i)$ and $iii)$ in Corollary \ref{Cor:linear:poly}.  Next, by contradiction, let us assume that $ii)$ holds true and $i)$ does not hold, that is, the degree of $P$ is less than one. Then, for all points in $x\in \R^n$ one has that $\D P(x)$ is constant. In particular, since the polynomial $P$ is not constant, $\D P(x)$ is constant on ${R}(P)$ and this contradicts the assumption $ii)$. Finally, by contradiction, let us assume that $iii)$ holds true and that $ii)$ is false. So, there exists a sequence of harmonic polynomials $P_k$ such that
    \[
    N(0,P_k,\rho)\ge 1+\e, \quad \lim_{r\to\infty}N(0,P_k,r)\le \overline N,\quad 
     \Big|\frac{\D P_k(x_1)}{|\D P_k(x_1)|}\cdot \frac{\D P_k(x_2)}{|\D P_k(x_2)|} \Big|\geq 1-k^{-1},
    \]
    for every $x_1,x_2 \in {R}(P_k)\cap B_{k}$. Considering the normalized sequence ${P_k}/{\|P_k\|_{L^2(\partial B_1)}}$ and using standard compactness argument via Schauder estimates, we obtain that $P_k \to P_\infty$ in $C^{1,\alpha}_\loc(\R^n)$ where $P_\infty$ is a polynomial with degree $N_\infty$, which satisfies
    \[
    \Delta P_\infty =0,\quad  1+\e\le N_\infty\le \overline{N},\quad \|P_\infty\|_{L^2(\partial B_1)}=1,\quad \frac{\D P_\infty(x)}{|\D P_\infty(x)|}={\bf e}\in\mathbb{S}^{n-1}, \ \forall x\in {R}(P_\infty).
    \]
    Since $N_\infty\ge 1+\e>1$, one has that $P_\infty$ has degree greater than one, which is a contradiction with the fact that $\D P_\infty(x)=|\D P_\infty(x)|{\bf e}$, for every $x\in{R}(P_\infty)$, see Corollary \ref{Cor:linear:poly}.
\end{proof}

As consequence of the previous results, we obtain that following lemma,  which extends \cite[Lemma 2.9]{TerTorVit24b} to the $n$-dimensional setting.

\begin{Lemma}\label{L:2:points}
Fix $\e>0$ and $\overline{N}>0$. There exist small $\delta>0$ and large $R>1$ such that the following holds true. Let us suppose that $A\in\mathcal{A}$ and $U\in H^1(B_R)$ such that
\[
[A]_{C^{0,1}(B_R)}\le\delta, \qquad L_A U = 0 \text{ in }B_R,\qquad 0\in {R}(U) \qquad N(0,U,R)\le \overline{N},
\]
and
\[
N(0,U,1)\ge 1+\e.
\]
Then, there exist $R'\in(0,R)$, $\e'\in(0,1)$ depending only on $\overline{N}$, $\e$, and there exists a point $x_1 \in {R}(U)\cap B_{R'}$ such that
    \[
        \Big|\frac{\D U(x_1)}{|\D U(x_1)|}\cdot \frac{\D U(0)}{|\D U(0)|}\Big| \le 1 - \e'.
    \]
\end{Lemma}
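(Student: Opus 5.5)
We argue by contradiction combined with a compactness (blow-up) argument, reducing to the polynomial statement Corollary \ref{Cor:quadratic:pol}. Suppose the lemma fails for some fixed $\e,\overline N$. Then for every $k$ there are $\delta_k=1/k$, $R_k=k$, matrices $A_k\in\mathcal A$ with $[A_k]_{C^{0,1}(B_k)}\le 1/k$, and solutions $U_k$ of $L_{A_k}U_k=0$ in $B_k$. These satisfy $0\in R(U_k)$, $N(0,U_k,k)\le\overline N$ and $N(0,U_k,1)\ge1+\e$. Moreover, for every $R'$ and every $\e'$ (in particular for the pairs $R'=j$, $\e'=1/j$ once $k>j$), all points $x\in R(U_k)\cap B_{R'}$ satisfy
\[
\Big|\frac{\D U_k(x)}{|\D U_k(x)|}\cdot\frac{\D U_k(0)}{|\D U_k(0)|}\Big|>1-\e'.
\]
We normalize $\|U_k\|_{L^2(\partial B_1)}=1$. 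We may also assume $A_k(0)\to A_\infty$, after a linear change of variables if we want $A_\infty=\mathbb I$.

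Compactness comes next. The almost monotonicity of the generalized frequency gives $N(0,U_k,r)\le e^{C}\overline N$ for all $r\le k$, with an error vanishing as $\delta_k\to0$. The doubling estimate Lemma \ref{L:doubling} then gives $\fint_{B_r}U_k^2\le C r^{2\overline N'}$ uniformly in $k$ for every fixed $r$. Uniform $C^{1,\alpha}$ Schauder estimates (the Lipschitz constants go to $0$) yield, up to a subsequence, $U_k\to U_\infty$ in $C^{1,\alpha}_{\loc}(\R^n)$. The limit $U_\infty$ solves $L_{A_\infty}U_\infty=0$ in $\R^n$ with $A_\infty$ constant. Its frequency is bounded at all scales, so by Liouville/blow-down arguments (as in \cite[Proposition 5.1]{TerTorVit24b}) it is a harmonic polynomial with $\lim_{r\to\infty}N(0,U_\infty,r)\le\overline N$. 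The frequency at scale $1$ passes to the limit because $H$ and $D$ converge, using the $C^{1,\alpha}$ convergence and the normalization which excludes a vanishing limit. Hence $N(0,U_\infty,1)\ge 1+\e$, so by Corollary \ref{Cor:quadratic:pol} the degree of $U_\infty$ exceeds one.

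To reach a contradiction, Corollary \ref{Cor:quadratic:pol} ii) provides $R_0$, $\delta_0$ and two regular points $y_1,y_2\in R(U_\infty)\cap B_{R_0}$ whose unit normals satisfy $|\nu(y_1)\cdot\nu(y_2)|\le1-\delta_0$. The main obstacle is that we need the comparison with the normal at $0$, and that $0$ must stay regular in the limit. The condition $0\in R(U_k)$ does not prevent $\D U_k(0)\to0$.

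We split into two cases.

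\textbf{Case 1: $\D U_\infty(0)\neq0$.} Then $\nu(0)$ is defined. By the triangle-type inequality for angles, one of $y_1,y_2$ (call it $y$) satisfies $|\nu(y)\cdot\nu(0)|\le1-\delta_0'$ with $\delta_0'=\delta_0'(\delta_0)>0$. Regular zeros are stable under $C^1$ convergence via the implicit function theorem. So there are $x_k\in R(U_k)$ with $x_k\to y$ and $\D U_k(x_k)\to\D U_\infty(y)\neq0$. The normals converge, which contradicts the negated statement for the pair $R'=R_0+1$, $\e'=\delta_0'/2$.

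\textbf{Case 2: $\D U_\infty(0)=0$.} Then $0\in S(U_\infty)$. Note that $\D U_k(0)/|\D U_k(0)|$ has a limit direction $\mathbf e$ along a subsequence. The negated hypothesis, passed to the limit through the stable regular points $x_k\to y$ as above, gives $|\nu(y)\cdot\mathbf e|=1$ for every $y\in R(U_\infty)$. So all regular normals of $U_\infty$ are parallel to $\mathbf e$. By Lemma \ref{L:dim:red:pol} applied to directions orthogonal to $\mathbf e$, this forces $U_\infty$ to depend only on $x\cdot\mathbf e$. Then $U_\infty$ is linear, since an $A_\infty$-harmonic polynomial of one variable has degree at most one. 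This contradicts the fact that its degree exceeds one.

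The dependence of $R'$ and $\e'$ only on $\overline N,\e$ follows because the contradiction argument produces a uniform pair. The delicate point is making the compactness rigorous: we need the uniform frequency bound to pass to the limit, and a nondegenerate limit.
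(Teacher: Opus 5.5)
Your proof is correct and takes the same route as the paper. Both argue by contradiction, use compactness (Remark \ref{R:normalized:convergence}) to reach an $\bar A$-harmonic polynomial of degree at least two, and conclude via Lemma \ref{L:dim:red:pol}/Corollary \ref{Cor:linear:poly}, because all regular normals of the limit are forced to be parallel to a single direction. You are more careful than the paper on two points, and your Case 2 argument (limit direction $\mathbf e$ of $\D U_k(0)/|\D U_k(0)|$ combined with stability of regular zeros under $C^1$ convergence) covers both the paper's tacit assumption $\D P(0)\neq 0$ and the sign ambiguity, so it also makes your Case 1 redundant.
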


Before proceeding to the proof, we recall the following compactness result.

\begin{remark}\label{R:normalized:convergence}  Let us consider a sequence of matrices $A_k \in \mathcal{A}$ and a sequence of non trivial functions $u_k\in H^1(B_k)$ such that
    \[
    [A_k]_{C^{0,1}(B_k)}\le k^{-1}, \qquad L_{A_k} u_k = 0 \text{ in }B_k, \qquad N(0,u_k,k)\le \overline{N}.
    \]
    Then, $A_k \to \bar A$ uniformly on every compact set $K\subset \R^n$, where $\bar A$ is a constant uniformly elliptic matrix and the normalized sequence $\tilde u_k := u_k/\|u_k\|_{L^2(\partial B_1)}$ satisfies 
    \[
    L_{A_k} \tilde u_k = 0 \text{ in }B_k, \qquad N(0,\tilde u_k,k)\le \overline{N},\qquad \|\tilde u_k\|_{L^2(\partial B_1)}=1,
    \]
    For a fixed $\bar{R}>1$, applying classical Schauder estimates yields that $\tilde u_k$ is uniformly bounded in $C^{1,\alpha}(B_{\bar R})$ and by employing a diagonal argument we get $\tilde u_k \to P$ in $C^{1,\alpha}_\loc(\R^n)$ and $P$ satisfies
    \[
    L_{\bar A}P=0 \text{ in } \R^n,\qquad \|P\|_{L^2(\partial B_1)}=1,\qquad N_\infty:=\lim_{t\to\infty}N(0,P,t)\le \overline{N}.
    \]
    By a blow-down argument (see \cite[Proposition 5.1]{TerTorVit24b} and \cite[Corollary 4.7]{LinLin22}), we get that $P$ is a non trivial polynomial of degree $N_\infty\le\overline N$.
\end{remark}

\begin{proof}[Proof of Lemma \ref{L:2:points}]    
Arguing by contradiction, let us suppose that there exist sequences $A_k\in \mathcal{A}$, $U_k \in H^1(B_k)$ satisfying
    \[
    [A_k]_{C^{0,1}(B_k)}\le k^{-1}, \quad L_{A_k} U_k = 0 \text{ in }B_k, \quad 0 \in {R}(U_k),\quad N(0,U_k,k)\le \overline{N}, \quad N(0,U_k,1)\ge 1+\e,
    \]
    and
    \[
    \Big|\frac{\D U_k(x_1)}{|\D U_k(x_1)|}\cdot \frac{\D U_k(0)}{|\D U_k(0)|} \Big|\ge 1- k^{-1},\quad\text{for every }x_1 \in {R}(U_k).
    \]
    Considering $\tilde U_k:={U_k}/{\|U_k\|_{L^2(\partial B_1)}}$ and using Remark \ref{R:normalized:convergence}, we get that
     $\tilde U_k \to P$ in $C^{1,\alpha}_\loc(\R^n)$ and $P$ is a polynomial satisfies
    \[
    L_{\bar{A}} P = 0 \ \text{ in }\R^n, \quad \|P\|_{L^2(\partial B_1)}=1, \quad  1+ \e\le  \lim_{t\to\infty}N(0,P,t)\le \overline{N},
    \]
    where $\bar{A}$ is a constant uniformly elliptic matrix. 
 Therefore, $P$ has degree greater or equal than two since $\e>0$ (see Corollary \ref{Cor:quadratic:pol}). On the other hand, we have
    \[
    \frac{\D P(x_1)}{|\D P(x_1)|}\cdot \frac{\D P(0)}{|\D P(0)|} = 1,\quad\text{for every }x_1 \in {R}(P),
    \]
    which means that $\D P(x)=|\D P(x)|{\bf e}$, for every $x_1\in{R}(P)$, where ${\bf e}\in\mathbb{S}^{n-1}$. This is a contradiction with Corollary \ref{Cor:linear:poly}.
\end{proof}

\section{A priori Schauder estimates}

This section is devoted to the proof of Theorem \ref{T:1}. Before addressing the proof, we start by proving a simple lemma concerning the control of the $L^\infty$-norm of the gradient of solutions to \eqref{eq:elliptic} away from the zero set of $u$.

\begin{Lemma}\label{L:Linfty:grad}
     Let $\overline{N}>0$. Let us suppose that $A\in\mathcal{A}$, $u\in H^1(B_1)$ such that
\[
\quad L_A u = 0 \text{ in }B_1, \quad N(0,u,1)\le \overline{N}.
\]
For every $x_0 \in B_1\setminus {Z}(u)$, set $\delta :=\dist(x_0, {Z}(u))$. Then, there exists $C>0$, which not depends on $\delta$ such that
\begin{equation}\label{eq:very:regular:bad:case2}
       \|\D u\|_{L^\infty(B_{\delta}(x_0))}\le \frac{C}{\delta}|u(x_0)|.
\end{equation}
\end{Lemma}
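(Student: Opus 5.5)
The plan is to combine three standard ingredients at scale $\delta$: a rescaled interior gradient estimate, the doubling inequality of Lemma \ref{L:doubling}, and the Harnack inequality in the ball where $u$ does not change sign. The only information about $u$ inside $B_\delta(x_0)$ is that $u\neq 0$ there, so $u$ has a fixed sign on $B_\delta(x_0)$. The difficulty is that the gradient must be controlled on all of $B_\delta(x_0)$, whose boundary touches $Z(u)$, so Harnack alone will not suffice there. Throughout I assume, as is implicit in the statement, that $x_0\in B_{1/2}$ and that $\delta$ is small enough that $B_{4\delta}(x_0)$ lies within the range of validity of Lemma \ref{L:doubling}, that is, $4\delta\le \Lambda^{-1/2}(1-|x_0|)$. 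The remaining range of $\delta$ is bounded below and can be handled by the unscaled estimates.

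\emph{Step 1 (rescaled gradient estimate).} Set $u_\delta(y):=u(x_0+\delta y)$. It solves $L_{A_\delta}u_\delta=0$ in $B_4$ with $A_\delta(y)=A(x_0+\delta y)$. This matrix has the same ellipticity constants and Lipschitz seminorm at most $\delta L\le L$. Interior $C^{1,\alpha}$ Schauder estimates on $B_1\Subset B_2$ give
\[
\|\nabla u\|_{L^\infty(B_\delta(x_0))}=\frac1\delta\|\nabla u_\delta\|_{L^\infty(B_1)}\le \frac C\delta\Big(\fint_{B_{2\delta}(x_0)}u^2\,dx\Big)^{1/2},
\]
with $C$ depending only on $n,\lambda,\Lambda,L$.

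\emph{Step 2 (doubling down to $B_{\delta/2}(x_0)$).} Apply Lemma \ref{L:doubling} with $R=2\delta$ and $r=\delta/2$:
\[
\fint_{B_{2\delta}(x_0)}u^2\le C\,4^{2N(x_0,u,2\delta)}\fint_{B_{\delta/2}(x_0)}u^2 .
\]
We then need $N(x_0,u,2\delta)\le C(\overline N)$ uniformly in $x_0\in B_{1/2}$ and $\delta$. First, the almost monotonicity of $r\mapsto e^{Cr}N(x_0,u,r)$ gives $N(x_0,u,2\delta)\le e^{C}N(x_0,u,r_0)$ for a fixed $r_0\approx \tfrac14\Lambda^{-1/2}$. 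Second, the frequency at a fixed scale centred at $x_0$ must be bounded in terms of $N(0,u,1)\le\overline N$. This transfer of the frequency bound from the origin to nearby centres is standard: it follows from comparing $H(x_0,u,r_0)$ with $H(0,u,1)$ via doubling, as in \cite{TerTorVit25,LinLin22}. I expect this to be the main technical obstacle. Everything else is routine, but this transfer is what makes the constant independent of $x_0$ and $\delta$, and I would quote it from those references rather than reprove it.

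\emph{Step 3 (Harnack).} Since $B_{\delta/2}(x_0)\subset B_\delta(x_0)\subset B_1\setminus Z(u)$, the function $|u|$ is a positive solution of $L_A|u|=0$ in $B_\delta(x_0)$. The Harnack inequality, which is scale invariant with constant depending on $n,\lambda,\Lambda$, gives $\sup_{B_{\delta/2}(x_0)}|u|\le C|u(x_0)|$. Hence $\fint_{B_{\delta/2}(x_0)}u^2\le C|u(x_0)|^2$. Chaining Steps 1–3 yields \eqref{eq:very:regular:bad:case2} with $C$ depending only on $n,\lambda,\Lambda,L,\overline N$, and not on $\delta$.
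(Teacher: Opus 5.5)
Your proposal is correct and follows the paper's proof step for step: scale-invariant Schauder estimates on $B_{2\delta}(x_0)$, then the doubling inequality of Lemma \ref{L:doubling} to pass from $B_{2\delta}(x_0)$ down to $B_{\delta/2}(x_0)$, then the Harnack inequality for the sign-definite $u$ on $B_\delta(x_0)$. Your extra care only makes explicit what the paper uses tacitly: you restrict to $x_0\in B_{1/2}$ and small $\delta$, which is the only regime used later, and you bound $N(x_0,u,2\delta)$ uniformly via almost monotonicity and transfer from the origin. Your scaling also correctly gives the factor $\delta^{-2}$ in the squared estimate, which the paper's display misprints as $\delta^{-1}$.
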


\begin{proof}
By using the scaling invariant Schauder estimates to $u$ in $B_\delta(x_0)$, the doubling property (see Lemma \ref{L:doubling}) and the Harnack inequality in $B_{\delta/2}(x_0)$, we get
\[
\|\D u\|^2_{L^{\infty}(B_\delta(x_0))} \le \frac{C}{\delta} \fint_{B_{2\delta}(x_0)}u^2\,dx \le  \frac{C}{\delta} \fint_{B_{\delta/2}(x_0)} u^2 \,dx\le  \frac{C}{\delta}\inf_{B_{\delta/2}(x_0)}|u|^2 \le  \frac{C}{\delta} |u(x_0)|^2. \qedhere
\]
\end{proof}

Next, we recall a Liouville-type theorem for the ratio of entire solutions.

\begin{teo}[{\cite[Theorem 1.2]{TerTorVit25}}]\label{T:liouville}
    Let $A$ be a constant uniformly elliptic matrix, $u$ be a polynomial and $v$ such that 
    \[-\div(A\D u) = -\div(A \D v)=0  \quad\text{in }\R^n,\]
    and $Z(u) \subseteq Z(v)$. Suppose that there exist $\gamma\ge0$, $C>0$ such that \[
    \Big|\frac{v}{u}(x)\Big|\le C(1+|x|)^\gamma, \quad\text{in }\R^n.
    \]
Then, the ratio $v/u$ is a polynomial of degree at most $\lfloor \gamma\rfloor$.
\end{teo}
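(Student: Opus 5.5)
The plan is to reduce everything to polynomial algebra. First I will show that $v$ is itself a polynomial. Then I will use the division lemma, as in the proof of Lemma \ref{L:dim:red:pol}, to see that $v/u$ is a polynomial. Finally, the growth bound caps its degree.

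\textbf{Step 1 (normalization).} Replacing $u,v$ by $u(\bar A^{1/2}x)$ and $v(\bar A^{1/2}x)$, where $\bar A:=A$, reduces to $A=\mathbb{I}$. This change of variables preserves the inclusion $Z(u)\subseteq Z(v)$, polynomiality, and the growth bound up to changing $C$. So from now on $u$ and $v$ are harmonic in $\R^n$.

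\textbf{Step 2 ($v$ is a polynomial).} Let $d$ be the degree of $u$, so that $|u(x)|\le C(1+|x|)^d$. The hypothesis gives
\[
|v(x)|\le C(1+|x|)^{\gamma}|u(x)|\le C(1+|x|)^{\gamma+d}\quad \text{on } \R^n\setminus Z(u).
\]
On $Z(u)$ we have $v=0$ because $Z(u)\subseteq Z(v)$, so the bound holds everywhere. The classical Liouville theorem for entire harmonic functions of polynomial growth (via interior gradient estimates on balls $B_r$ with $r\to\infty$) then shows that $v$ is a harmonic polynomial of degree at most $\lfloor\gamma+d\rfloor$.

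\textbf{Step 3 (division).} Now $u,v$ are harmonic polynomials with $Z(u)\subseteq Z(v)$. By Murdoch's division lemma (\cite[Theorem 2, Lemma 4]{Mur64}, \cite[Lemma 5.6]{TerTorVit25}), exactly as invoked in Lemma \ref{L:dim:red:pol}, there is a polynomial $Q$ with $v=Qu$. Hence $v/u=Q$ on $\R^n\setminus Z(u)$. Since $Z(u)$ has empty interior, $Q$ is the continuous extension of $v/u$ to all of $\R^n$.

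I expect this step to be the main obstacle if one wants it self-contained. The division lemma uses that $u$ is harmonic in an essential way: $u$ changes sign across its regular zero set, which is dense in $Z(u)$. So each irreducible factor of $u$ vanishes to odd order along an $(n-1)$-dimensional real zero set, and the inclusion of real zero sets then upgrades to divisibility. If I had to argue it directly, I would factor $u$ into irreducibles. For each irreducible factor $p$ whose real zero set has dimension $n-1$, I would apply the real Nullstellensatz to get $p\mid v$. I would then divide out and iterate on the multiplicity, using that $v/p$ again vanishes on $Z(u/p)$ by continuity.

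\textbf{Step 4 (degree bound).} Write $Q=\sum_{j\le k}Q_j$ with $Q_j$ homogeneous of degree $j$ and $Q_k\not\equiv0$. Choose $\xi\in\mathbb{S}^{n-1}$ with $Q_k(\xi)\neq0$ and $t\xi\notin Z(u)$ for $t$ large; this is possible since $Z(u)$ is a proper algebraic set. Along this ray,
\[
|Q(t\xi)|=|Q_k(\xi)|\,t^k+O(t^{k-1})\le C(1+t)^\gamma .
\]
Letting $t\to\infty$ forces $k\le\gamma$, hence $k\le\lfloor\gamma\rfloor$. This concludes the plan.
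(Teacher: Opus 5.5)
Your proof is correct. The paper does not reprove this theorem (it is quoted from Terracini--Tortone--Vita), but your route is the standard proof of that result: the bound $|v|\le C(1+|x|)^{\gamma}|u|$ forces $v$ to be a polynomial, Murdoch's division lemma (the same tool the paper uses in Lemma~\ref{L:dim:red:pol}) gives $v=Qu$, and the growth bound along a generic ray caps $\deg Q$ at $\lfloor\gamma\rfloor$.

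The only soft spot is the optional self-contained sketch in Step 3, which you should drop in favour of the citation. Density of $R(u)$ in $Z(u)$ does not by itself rule out an irreducible factor of $u$ that does not change sign: such a factor's real zero set would lie entirely inside $S(u)$, so density is not contradicted. Excluding these factors is precisely what the cited division lemma provides.
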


We are finally ready to provide the proof of our main result.

\begin{proof}[Proof of Theorem \ref{T:1}] 
Let us suppose that $n$ is an even number (the odd case follows analogously) and call $w:=v/u$. In light of \cite[Proposition 3.11]{TerTorVit25}, which ensures the local boundedness of the ratio $w$, it is sufficient to prove that
\[
{\| w  \|_{C^{1,\alpha}(B_{1/2})}}\le C \|w\|_{L^\infty(B_1)},
\]
for some $C>0$ depending only on the class $\mathcal{S}_{N_0}$.
Equivalently, following the strategy of \cite[Theorem 2.28]{FerRos22}, we aim to show that for every sufficiently small $\delta>0$ there exists $C_\delta>0$ such that
\begin{equation}\label{eq:1:alpha:proof}
      [\D w]_{C^{0,\alpha}(B_{1/2})}\le \delta
 [\D w]_{C^{0,\alpha}(B_{1})} + C_\delta \big( \|\D w\|_{L^\infty(B_{1})} + \| w\|_{L^\infty(B_{1})}\big).
\end{equation}
Using the sub-additivity of the H\"older seminorms with respect to unions of convex sets (see \cite[Lemma 2.27]{FerRos22}) and a covering argument, it follows that \eqref{eq:1:alpha:proof} implies \eqref{eq:T:1}.

In order to establish \eqref{eq:1:alpha:proof}, we argue by contradiction. Let us assume that there exist sequences $A_k \in \mathcal{A}$, $u_k \in \mathcal{S}_{N_0}$, and $v_k$ be solutions to 
\[
L_{A_k} u_k=L_{A_k} v_k =0 \quad \text{in }B_1, \quad \text{such that }{Z}(u_k)\subseteq {Z}(v_k),
\]
$w_k:=v_k/u_k \in C^{1,\alpha}(B_1)$ and there exists $\delta_0>0$ such that
\begin{equation}\label{eq:1:alpha:contradiction}
    [\D w_k]_{C^{0,\alpha}(B_{1/2})}>\delta_0
 [\D w_k]_{C^{0,\alpha}(B_{1})} + k \big( \|\D w_k\|_{L^\infty(B_{1})} + \| w_k\|_{L^\infty(B_{1})}\big).
 \end{equation}
By definition of H\"older seminorm, let us consider two sequences of points $x_k,y_k \in B_{1/2}$ such that 
\begin{equation}\label{eq:two:points}
    \frac{|\D w_k(x_k)-\D w_k(y_k)|}{|x_k-y_k|^\alpha} \ge \frac{[\D w_k]_{C^{0,\alpha}(B_{1/2})}}{2},
\end{equation}
and set $r_k:=|x_k-y_k|$. By using \eqref{eq:1:alpha:contradiction}, it readily follows that $r_k\to 0$, in fact
\[
r_k^\alpha \le \frac{|\D w_k(x_k)-\D w_k(y_k)|}{2 [\D w_k]_{C^{0,\alpha}(B_{1/2})}} \le \frac{\| \D w_k\|_{L^\infty(B_{1})}}{[\D w_k]_{C^{0,\alpha}(B_{1/2})}} \le k^{-1}\to 0.
\]
Let us write $x_{k}:=((x_{k})_1,\dots,(x_{k})_n) \in \R^n$ and for every $i=1,\dots,n/2$ let us define the sequence of points 
\[
x_{i,k}:=((x_{k})_{2i-1},(x_{k})_{2i}) \in\R^2.
\] 
We denote by $\xi_{i,k}\in\R^2$ a chosen projection of $x_{i,k}$ on the  two-dimensional regular set ${R}(u_{i,k})$ restricted to $\R^2$ and define $\delta_{k}^i:=|x_{i,k}-\xi_{i,k}|$. We now define 
\begin{equation}\label{eq:hat:x:ik}
\hat{x}_{i,k}:=\begin{cases}
    x_{i,k}, & \text {if } \delta_{i,k}/r_k \to \infty,\\
    \xi_{i,k},  & \text {if } \delta_{i,k}/r_k \le C,
\end{cases}
\end{equation}
and $\hat x_k:=(\hat x_{1,k},\dots,\hat x_{n/2,k}) \in B_{3/4}$.

Next, define the sequence of functions
\[
W_k(x):=\frac{w_k(\hat{x}_k+r_k x) - w_k(\hat x_k) - {\D w_k(\hat x_k)} \cdot r_k x}{r_k^{1+\alpha}[\D w_k]_{C^{0,\alpha}(B_{1})}}, \quad \text{for } x\in \Omega_k:= \frac{B_1-\hat{x}_k}{r_k}.
\]
For every compact set $K\subset \R^n$, it is standard to verify that
\[
[\D W_k]_{C^{0,\alpha}(K)} \le 1.
\]
Moreover, since $ W_k(0) = 0 $ and $ |\D W_k(0)| = 0 $, we can apply the first-order expansion of $ W_k $ at the origin to obtain a local bound on its $ C^{1,\alpha} $-norm. In particular, for every compact set $ K \subset \Omega_k $, we deduce that
\[
\| W_k\|_{C^{1,\alpha}(K)} \le C_K,
\]
for some constant $ C_K>0 $ depending only on the diameter of $ K $. By a standard compactness argument via Arzel\`a-Ascoli theorem, we then obtain that $W_k\to W$ in $C^{1}_\loc(\R^n)$ and $W$ satisfies the growth condition
\begin{equation}\label{eq:growth:W}
    |W(x)|\le C(1+|x|)^{1+\alpha}, \quad\text{ for every } x\in\R^n.
\end{equation}
At this point, let us consider the sequences of points
\[
    P_k := \frac{x_k - \hat x_k}{r_k}, \quad \hat P_k := \frac{y_k - \hat x_k}{r_k}.
\]
Using \eqref{eq:two:points} and \eqref{eq:1:alpha:contradiction}, we deduce that
\[
|\D W_k(P_k) - \D W_k(\hat P_k)| = \frac{|\D w_k(x_k) - \D w_k(y_k)|}{r_k^\alpha [\D w_k]_{C^{0,\alpha}(B_1)}}
\ge \frac{1}{2}  \frac{[\D w_k]_{C^{0,\alpha}(B_{1/2})}}{[\D w_k]_{C^{0,\alpha}(B_1)}}
\ge \frac{\delta_0}{2}.
\]
Moreover, by the choice of $\hat x_k$ it follows that $ P_k $ and $ \hat P_k $ lie inside a compact set. Therefore, one has that $P_k\to P$, $\hat{P}_k\to \hat P$ and $P\not=\hat P$. Then, by the uniform convergence $ \D W_k \to \D W $ it follows that $ \D W $ is not constant.

The next step is to look at the equation satisfied by $W_k$, recalling that $w_k=v_k/u_k$ is a weak solution to
\[
-\div(u_k^2 A_k \D w_k)=0 \quad \text{in } B_1.
\]
and, by \cite[Theorem 1.2]{TerTorVit24}, satisfies
\begin{equation}\label{eq:conormal:wk}
    \D u_k \cdot A_k\D w_k = 0  \quad \text{on } {R}(u_k).
\end{equation}
Setting $\bar A_k:=A_k(\hat x_k+r_k x)$ and using standard computation, it follows that, for every $\phi\in C_c^\infty(\R^n)$ and $k$ sufficiently large 
\begin{equation}\label{eq:W_k:not:normalized}
\int u_k^2(\hat x_k+r_k x) \bar A_k(x) \D W_k(x) \cdot \D \phi(x) \,dx = - \frac{1}{ r_k^{\alpha} [\D w_k]_{C^{0,\alpha}(B_1)} } \int u_k^2(\hat x_k+r_k x) \bar A_k(x) {\D w_k(\hat x_k)} \cdot \D \phi(x) \,dx.
\end{equation}
Let us recall that $u_k(x):=\prod_{i=1}^{n/2} u_{i,k}(x_{2i-1,2i})$ and define the rescaled functions
\[
U_{i,k}(x):=\frac{u_{i,k}(\hat x_k+r_k x)}{H(\hat x_k,u_{i,k},r_k)^{1/2}}, \qquad H(\hat x_k,u_{i,k},r_k):=\fint_{\partial B_{r_k}}u_{i,k}^2 \, d\sigma, \qquad  U_k(x):= \prod_{i=1}^{n/2} U_{i,k}(x).
\]
Dividing both sides of \eqref{eq:W_k:not:normalized} by 
$H(\hat x_k,u_k,r_k)=\prod_{i=1}^{n/2}H(\hat x_k,u_{i,k},r_k)$, we obtain
\begin{equation}\label{eq:Wk:normalized}
    \int U_k^2 \bar A_k \D W_k \cdot \D \phi\,dx = -\frac{1}{ r_k^{\alpha} [\D w_k]_{C^{0,\alpha}(B_1)} }\int U_k^2 \bar A_k {\D w_k(\hat x_k)} \cdot \D \phi \,dx.
\end{equation}
We now aim to pass to the limit as $ k \to \infty $ in the previous identity in order to derive a contradiction. We begin by analyzing the left-hand side. First, recalling that $ {\|A_k\|}_{C^{0,1}(B_1)} \le L $, it follows that the rescaled coefficients satisfy $ {[\bar{A}_k]}_{C^{0,1}(K)} \le L r_k $, for every compact set $K\subset \R^n$. Therefore, by the Arzel\`a-Ascoli theorem, we have $ \bar{A}_k \to \bar{A} $ uniformly on every compact sets of $\R^n$, where $ \bar{A} $ is a constant uniformly elliptic matrix. By using the compactness of the sequences $U_{i,k}$ (see \cite[Proposition 2.5]{TerTorVit25}), we get that $U_{i,k}\to U_i$ uniformly on every compact sets of $\R^n$, where $U_i$ is a global polynomial solution to $-\div(\bar A \D U_i)=0$ in $\R^n$. Subsequently, by Assumption \ref{A:orthogonal}, the product function $U = \prod_{i=1}^{n/2} U_i$ is also a global polynomial solution to  $-\div(\bar A \D U)=0$ in $\R^n$. Finally, we have shown before that $W_k \to W$ in $C^{1}_\loc(\R^n)$. Hence, combining these results it follows that 
\[
\int U_k^2 \bar A_k \D W_k \cdot \D \phi \,dx  \to \int U^2 \bar A \D W \cdot \D \phi\,dx,
\]
for every $\phi \in C_c^\infty(\R^n)$. 

If we prove that the right-hand side of \eqref{eq:Wk:normalized} vanishes as $ k \to \infty $, then the limit function $ W $ solves the equation
\[
-\div(U^2\bar A \D W) = 0 \quad \text{in } \mathbb{R}^n.
\]
By Remark \ref{R:ratio}, it follows that the function $ V := U W $ is an entire solution to $ -\div(\bar A \D V) = 0 $, with $ Z(U) \subseteq Z(V) $. Finally, combining the growth condition \eqref{eq:growth:W} with the fact that $ \D W $ is not constant, we obtain a contradiction with the Liouville Theorem \ref{T:liouville}.

Hence, in order to conclude the proof, it remains to show that the right-hand side of \eqref{eq:Wk:normalized} vanishes as $ k \to \infty $. The purpose of the following section is to establish this fact.

\subsection{Estimates of the right-hand side}\label{S:4.1}

Our final goal is to show that the right-hand side of \eqref{eq:Wk:normalized} vanishes as $k\to\infty$. First, we have that
\begin{align*}
   & \frac{1}{ r_k^{\alpha} [\D w_k]_{C^{0,\alpha}(B_1)}}\int U_k^2 \bar A_k {\D w_k(\hat x_k)} \cdot \D \phi \,dx\\
    &= \frac{1}{ r_k^{\alpha} [\D w_k]_{C^{0,\alpha}(B_1)}} \int U_k^2(x) \big( A_k (\hat x_k+r_k x ) -A_k(\hat x_k)\big) \D w_k(\hat x_k )\cdot \D \phi(x)\,dx\\
    &+  \frac{1}{ r_k^{\alpha} [\D w_k]_{C^{0,\alpha}(B_1)}} \int U_k^2(x) (A_k \D w_k)(\hat x_k )\cdot \D \phi(x)\,dx\\
   & \le \|A_k\|_{ C^{0,1}(B_1)}r_k^{1-\alpha} \frac{\|\D w_k\|_{L^\infty(B_1)}}{[\D w_k]_{C^{0,\alpha}(B_1)}} +  \frac{1}{ r_k^{\alpha} [\D w_k]_{C^{0,\alpha}(B_1)}} \int U_k^2(x) (A_k \D w_k)(\hat x_k )\cdot \D \phi(x)\,dx\\
   &= o(1) - \frac{2}{ r_k^{\alpha} [\D w_k]_{C^{0,\alpha}(B_1)}} \int U_k(x)\D U_k(x)\cdot (A_k \D w_k)(\hat x_k) \phi(x) \,dx,
\end{align*}
where we used $\|A_k\|_{ C^{0,1}(B_1)}\le L$, $\|U_k\|_{L^\infty(B_R)}\le C_R$, $\|\D w\|_{L^\infty(B_{1})}\le k^{-1}[\D w_k]_{C^{0,\alpha}(B_{1/2})}$ and the divergence theorem. Moreover, using the orthogonality Assumption \ref{A:orthogonal}, we can expand this last expression as
\begin{align*}
   &\frac{1}{ r_k^{\alpha} [\D w_k]_{C^{0,\alpha}(B_1)}} \Big|\int U_k(x)\D U_k(x)\cdot (A_k \D w_k)(\hat x_k) \phi(x) \,dx\Big|\\
    &=  \Big| \sum_{i=1}^{n/2}  \frac{1}{ r_k^{\alpha} [\D w_k]_{C^{0,\alpha}(B_1)}} \int \frac{U_k^2(x)}{U_{i,k}^2(x)} U_{i,k}(x) \D U_{i,k}(x)\cdot (A_k \D w_k)(\hat x_k) \phi(x) \,dx\Big| \\
    & \le   \sum_{i=1}^{n/2} \Big\|\frac{U_k(x)}{U_{i,k}(x)}\Big\|_{L^\infty(B_R)}^2   \Big| \frac{1}{ r_k^{\alpha} [\D w_k]_{C^{0,\alpha}(B_1)}} \int  U_{i,k}(x) \D U_{i,k}(x)\cdot (A_k \D w_k)(\hat x_k) \phi(x) \,dx\Big|   =: \sum_{i=1}^{n/2} c_{i,k}^2  I_{i,k},
\end{align*}
where $c_{i,k} = \| U_k/U_{i,k}\|_{L^\infty(B_R)}\le C_R$ uniformly in $k$ and
\begin{equation*}
    I_{i,k}:=\Big| \frac{1}{ r_k^{\alpha} [\D w_k]_{C^{0,\alpha}(B_1)}} \int  U_{i,k}(x) \D U_{i,k}(x)\cdot (A_k \D w_k)( \hat x_k) \phi(x) \,dx\Big|
\end{equation*}
In order to conclude the proof and prove that the right-hand side of \eqref{eq:Wk:normalized} vanishes, we establish the following result:
\begin{Lemma}\label{L:I:ik} 
Under the previous notations, for every $i=1,\dots,n/2$, the term $I_{i,k} \to 0$ as $k \to \infty$.
\end{Lemma}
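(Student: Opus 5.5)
The plan is to reduce $I_{i,k}$ to a two-dimensional quantity and then use the conormal condition \eqref{eq:conormal:wk} on $R(u_{i,k})$ to control it. Since $A_k$ is block diagonal and $U_{i,k}$ depends only on $(x_{2i-1},x_{2i})$, only the $i$-th planar component of $(A_k\D w_k)(\hat x_k)$ enters. Call it $g_{i,k}:=(A_{i,k}\D_i w_k)(\hat x_k)\in\R^2$, where $\D_i$ is the gradient in the $i$-th pair of variables. Integrating by parts,
\[
I_{i,k}=\frac{1}{2}\Big|\int U_{i,k}^2\,\frac{g_{i,k}}{r_k^\alpha[\D w_k]_{C^{0,\alpha}(B_1)}}\cdot\D_i\phi\,dx\Big| .
\]
Set $\gamma_{i,k}:=g_{i,k}/(r_k^\alpha[\D w_k]_{C^{0,\alpha}(B_1)})$. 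Along the computation I will use the planar form of \eqref{eq:conormal:wk}. At a point $z\in R(u_k)$ with $u_{i,k}(z_i)=0$ and $u_{j,k}(z_j)\neq0$ for $j\ne i$, the block structure reduces $\D u_k\cdot A_k\D w_k=0$ to $\D_iu_{i,k}\cdot A_{i,k}\D_iw_k=0$. By continuity of $\D w_k$ and density of such points, this holds at every $z$ with $z_i\in R(u_{i,k})$.

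\emph{Case $\delta_{i,k}/r_k\to\infty$.} In the plane the singular points of $u_{i,k}$ are isolated and are accumulated by regular nodal arcs, so the distance to $R(u_{i,k})$ equals the distance to $Z(u_{i,k})$. Hence $u_{i,k}$ has no zeros in a planar disc of radius $\delta_{i,k}$ around $x_{i,k}$. Lemma \ref{L:Linfty:grad} and the Harnack inequality give, on $B_R$, that $U_{i,k}$ is almost constant: $|\D U_{i,k}|\le C r_k/\delta_{i,k}$ and $U_{i,k}\to U_i$, a nonzero constant. To see that $\gamma_{i,k}$ stays bounded, take $\xi_{i,k}$ and the point $z_k=(\xi_{i,k},$ other coordinates of $\hat x_k)$. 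The conormal condition there kills the component of $g$ along $\nu_k:=\D u_{i,k}(\xi_{i,k})/|\D u_{i,k}(\xi_{i,k})|$ up to $C[\D w_k](\delta_{i,k})^\alpha$. This estimate is too weak by itself, so I will use instead the identity $\int U_{i,k}\D_iU_{i,k}\cdot\gamma_{i,k}\phi$ directly. It is bounded by $C|\gamma_{i,k}|\,r_k/\delta_{i,k}$, and $|\gamma_{i,k}|\le \|\D w_k\|_{L^\infty}/(r_k^\alpha[\D w_k])$. I expect these to balance using $r_k\le C\delta_{i,k}$, $\alpha<1$, and the Hölder control of $g$ between $x_{i,k}$ and $\xi_{i,k}$. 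Concretely, write $g(\hat x)=g(z_k)+O([\D w_k]\delta_{i,k}^\alpha)$ and use that $g(z_k)\perp\nu_k$, while $\D U_{i,k}$ is asymptotically parallel to $\nu_k$ after the harmonic-conjugate normalisation. This case needs checking, but it is the non-singular regime and should follow the planar argument of \cite{TerTorVit24b}.

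\emph{Case $\hat x_{i,k}=\xi_{i,k}\in R(u_{i,k})$.} By the planar conormal condition at $\hat x_k$, $\gamma_{i,k}=t_k\tau_k$ with $\tau_k\perp\nu_k:=\D_iU_{i,k}(0)/|\D_iU_{i,k}(0)|$. There are two subcases.

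If the limit $U_i$ is linear, it is invariant in the direction $\tau=\lim\tau_k$ (Corollary \ref{Cor:linear:poly}). Then $\int U_{i}^2\tau\cdot\D_i\phi=0$, and the only issue is the boundedness of $t_k$.

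If $U_i$ is not linear, then $N(0,U_{i,k},1)\ge1+\e$ for $k$ large by Corollary \ref{Cor:quadratic:pol} and the convergence. Lemma \ref{L:2:points}, applied in the plane to $U_{i,k}$ with $[\bar A_k]_{C^{0,1}}\le Lr_k\to0$, yields $x_1\in R(U_{i,k})\cap B_{R'}$ with $|\nu(x_1)\cdot\nu_k|\le1-\e'$, so that $|\tau_k\cdot\nu(x_1)|\ge c(\e')>0$. The conormal condition at $\hat x_k+r_kx_1$ combined with the Hölder seminorm gives
\[
|t_k|\,c(\e')\le \frac{|g(\hat x_k)-g(\hat x_k+r_kx_1)|}{r_k^\alpha[\D w_k]}+o(1)\le C(R')^\alpha .
\]

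So in both subcases $\gamma_{i,k}$ is bounded and, up to a subsequence, converges to some $\gamma_i$.

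The main obstacle is upgrading boundedness of $\gamma_{i,k}$ to $I_{i,k}\to0$. I would pass to the limit in the rescaled conormal identity at every regular point $x$ of $U_{i,k}$ in $B_R$. There, $g(\hat x_k+r_kx)/(r_k^\alpha[\D w_k])=\gamma_{i,k}+\bar A_k\D_iW_k(x)+o(1)$. This gives $(\gamma_i+\bar A\D_iW)\cdot\D_iU_i=0$ on $R(U_i)$. Combined with $\D W(0)=0$ and the two-point Lemma \ref{L:2:points} in the limit, I would try to show $\gamma_i\cdot\D_iU_i\equiv0$. By Lemma \ref{L:dim:red:pol} this means $U_i$ is invariant in the direction $\bar A^{-1}\gamma_i$, and therefore $\int U_i^2\gamma_i\cdot\D_i\phi=0$, which is exactly $I_{i,k}\to0$. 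If this last identification fails, the fallback is to keep the limit term. In that case $W+(\bar A^{-1}\gamma)\cdot x$ solves the limit equation $-\div(U^2\bar A\D\,\cdot)=0$ with the same growth, so the Liouville Theorem \ref{T:liouville} still forces $\D W$ to be constant, which is the contradiction the proof of Theorem \ref{T:1} needs.
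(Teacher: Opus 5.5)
Your planar reduction is sound. So is the density argument that extends \eqref{eq:conormal:wk} to every point whose $i$-th component lies in $R(u_{i,k})$. Your treatment of the subcase $\hat x_{i,k}\in R(u_{i,k})$ with non-linear limit $U_i$ also works: there $\gamma_{i,k}$ is bounded by Lemma \ref{L:2:points} and two conormal conditions. Your fallback of absorbing $(\bar A^{-1}\gamma_i)\cdot x$ into $W$ before applying Theorem \ref{T:liouville} is what the paper does when $\delta_k/r_k\le C$ and $\delta_k^*/r_k\le C$: it renormalizes $W_k$ without the $i$-th planar linear part. Trying to prove $\gamma_i\cdot\nabla_iU_i\equiv0$ is unnecessary.

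\emph{First gap: the linear-limit subcase.} In general $\gamma_{i,k}$ is not bounded. From \eqref{eq:1:alpha:contradiction} you only get $|\gamma_{i,k}|\le\Lambda k^{-1}r_k^{-\alpha}$, and $k^{-1}r_k^{-\alpha}\ge1$ may blow up. So the fact that $\int U_i^2\tau\cdot\nabla_i\phi=0$ says nothing about $t_k\int U_{i,k}^2\tau_k\cdot\nabla_i\phi$. The fallback is unavailable, and ``the only issue is the boundedness of $t_k$'' is a claim you cannot prove there. What is missing is a rate that beats the size of $t_k$. The paper gets it from the critical radius $\delta_k^*$, the first scale at which $N(\zeta_k,u_{i,k},\cdot)$ reaches $1+\varepsilon$.
\begin{itemize}
\item Below $\delta_k^*$, Schauder estimates, doubling and $\partial_{\tau_k}U_{i,k}(0)=0$ give $\|\partial_{\tau_k}U_{i,k}\|_{L^\infty(B_R)}\lesssim(r_k/\delta_k^*)^{\beta-\varepsilon}$.
\item At scale $\delta_k^*$, Lemma \ref{L:2:points} gives a regular point $\zeta_k^1$ with $|\zeta_k^1-\zeta_k|\lesssim\delta_k^*$ and normal transversal to $\nu_k$. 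The conormal conditions at $\zeta_k$ and $\zeta_k^1$ then give $|t_k|\lesssim(\delta_k^*/r_k)^\alpha/\varepsilon'$.
\item If instead $\delta_k^*\ge\rho$, pair $|t_k|\lesssim r_k^{-\alpha}$ with $|\nabla U_{i,k}(x)-\nabla U_{i,k}(0)|\lesssim r_k^{\beta-\varepsilon}$.
\end{itemize}
Choosing $\beta>\alpha+\varepsilon$ makes all these products vanish.

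\emph{Second gap: the case $\delta_{i,k}/r_k\to\infty$.} This case is left unproved, and the mechanism you suggest fails. It is not a non-singular regime: a singular point of $u_{i,k}$ may lie at distance $\lesssim\delta_{i,k}$ from $x_{i,k}$. Then $\nabla U_{i,k}$ on $B_R$ is not asymptotically parallel to $\nu_k$. For example, take $u=x_1^2-x_2^2$ and $x=(t,0)$. The gradient near $x$ points along $e_1$, while the normal at the nearest nodal point $(t/2,t/2)$ is $(e_1-e_2)/\sqrt2$. So $\nabla U_{i,k}\cdot g(\zeta_k)$ is not killed ($\zeta_k$ is your $z_k$), and your crude bound $|\gamma_{i,k}|\,r_k/\delta_{i,k}$ need not tend to zero.

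The paper splits $g(\hat x_k)=[g(\hat x_k)-g(\zeta_k)]+g(\zeta_k)$. The first piece contributes $(\delta_k/r_k)^\alpha\,r_k/\delta_k\to0$ by Lemma \ref{L:Linfty:grad}. In the second piece the $\nu_k$-component vanishes, and the $\tau_k$-component is handled as follows.
\begin{itemize}
\item If $\delta_k^*\ge\rho$, use the quantitative gradient-oscillation lemma of \cite{TerTorVit24b}.
\item If $\delta_k^*\to0$, the second point $\zeta_k^1$ gives $|g(\zeta_k)\cdot\tau_k|\lesssim[\nabla w_k]_{C^{0,\alpha}(B_1)}(\delta_k^*)^\alpha/\varepsilon'$.
\item When $\delta_k^*\lesssim\delta_k$, pair this with $\|\partial_{\tau_k}U_{i,k}\|_{L^\infty}\lesssim r_k/\delta_k$.
\item When $\delta_k^*\gg\delta_k$, pair it with the H\"older bound for $\partial_{\tau_k}U_{i,k}$ at scale $\delta_k^*$, plus doubling between $\delta_k$ and $\delta_k^*$.
\end{itemize}
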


Let us recall that, using Assumption \ref{A:orthogonal}, we analyze each term $I_{i,k}$ separately, focusing solely on the two-dimensional variable $(x_{2i-1},x_{2i})$ and keeping all other variables fixed. With a slight abuse of notation, we introduce the conventions used throughout the remainder of the proof.

First, let us call $I_k:=I_{i,k}$.
We define $\zeta_k \in\R^n$ (which depends on $i$) to be a projection of the point $\hat x_k$ (which is defined in \eqref{eq:hat:x:ik}) onto the set ${R}(u_{i,k})$, where ${R}(u_{i,k})\subset \R^n$ is the regular part of the nodal set of $u_{i,k}$. Specifically, writing $\zeta_k=(\zeta_{1,k},\dots,\zeta_{n/2,k})$ with $\zeta_{j,k}\in\R^2$ for each $j$, we set
\[
\zeta_{j,k}:=\begin{cases}
    \hat{x}_{j,k} &\text{if } j\neq i,\\
    \xi_{i,k} & \text{if } j=i,
\end{cases}
\]
where $\xi_{i,k}$ is a chosen projection of $x_{i,k}$ onto the two-dimensional regular set $R(u_{i,k})$ restricted to $\R^2$. Note that $|\hat x_k-\zeta_k|=|\hat x_{i,k}-\xi_{i,k}|$. Let us also define 
\[
p_k^0:=\frac{\zeta_k-\hat x_k}{r_k}.
\]
Furthermore, we set
\[
 \delta_{k}:=\delta_{i,k}, \qquad u_k:=u_{i,k},\qquad U_k:=U_{i,k}.
\]
Finally, fixed $\e\in(0,1-\alpha)$, let us introduce the following quantity (which depends on the index $i$):
\[
\delta_k^*:=\inf\Big\{
\rho\in(0,1/8): N( \zeta_k,u_k,\rho)=1+\e
\big\} =  \inf\Big\{
\rho\in(0,1/8): N(p_k^0,U_k,\rho/r_k)=1+\e
\Big\},
\]
which plays the same role as the \emph{critical radius} in \cite[Definition 2.10]{TerTorVit24b}.

Heuristically, for every $2$-dimensional component $u_{i,k}$, the parameter $\delta_k$ measures the distance to the regular set, while $\delta_k^*$ characterizes the scale at which the singular set becomes visible in the blow-up limit. Consequently, three distinct geometric scenarios occur:
\begin{enumerate}
    \item[\emph{i)}] If $\delta_k \ge C > 0$, the blow-up sequence remains away from the zero set of $u_k$. In this case, $u_k$ behaves as a non-degenerate coefficient, and the analysis falls within the classical uniformly elliptic regularity theory.
    \item[\emph{ii)}] If $\delta_k \to 0$ and $\delta_k^* \ge C > 0$, the regular part $R(u_k)$ is captured at a microscopic scale, whereas the singular set remains far away and does not affect the blow-up procedure. Hence, we can adapt the regularity theory developed in \cite{TerTorVit24}.
    \item[\emph{iii)}] If $\delta_k \to 0$ and $\delta_k^* \to 0$, both the regular and the singular parts actively influence the blow-up limit. In this regime, to control the linear term $I_k$, we invoke Lemma \ref{L:2:points} to project $\nabla w_k(\hat x_k)$ onto $R(u_k)$. By exploiting the conormal boundary condition satisfied by $w_k$ (see \eqref{eq:conormal:wk}), we successfully achieve a quantitative control on the oscillation of $\nabla w_k(\hat x_k)$, which allows us to conclude the proof.
\end{enumerate}

\subsubsection{The uniformly elliptic case}

Let us suppose that $\delta_k \ge C>0$ uniformly in $k$, which means that the blow-up points stay away from the zero set of $u_k$. In this case, one has that $H(\hat x_k,u_k,r_k)^{1/2}\ge C >0$ and
\[|\D U_k(x)| = \frac{r_k |\D u_k(\hat x_k+r_k x)|}{H(\hat x_k,u_k,r_k)^{1/2}} \le C r_k,\]
hence, combining this estimates with \eqref{eq:1:alpha:contradiction}, it readily follows that
\[
I_k= \Big| \frac{1}{ r_k^{\alpha} [\D w_k]_{C^{0,\alpha}(B_1)}} \int  U_{k}(x) \D U_{k}(x)\cdot (A_k \D w_k)( \hat x_k) \phi(x) \,dx\Big| \le C r_k^{1-\alpha} \to 0.
\]

\subsubsection{The regular case} In this case we assume that $\delta_k \to 0$ and $\delta_k^*\ge \rho>0$, meaning that the blow-up points remain at a positive distance from the \emph{singular set} of $u_k$. We also emphasize that the proof for this case has been given in \cite[\S 2.5]{TerTorVit24b} and holds true in every dimension $n\ge2$. We include it here for completeness and because we adopt a slightly different and more concise approach. We distinguish two subcases.

\smallskip
 $\bullet$ Case $\delta_k/r_k\le C$.
\smallskip

In such a case, $\hat x_k= \zeta_k$, which implies that the centers of the blow-up sequence lie on the regular set $R(u_k)$.

For every $\beta \in (0,1)$ and $x\in \supp(\phi)$, by applying the doubling condition to $u_k$ (see Lemma \ref{L:doubling}), we obtain
\[
\begin{aligned}
|\D U_k(x)-\D U_k(0)| &= \frac{r_k| \D u_k(\zeta_k+r_k x)-\D u_k(\zeta_k)|}{H(\zeta_k,u_k,r_k)^{1/2}}
\\ 
&\le C r_k^{1+\beta} \frac{[\D u_k]_{C^{0,\beta}(B_{\rho/2})}}{H(\zeta_k,u_k,r_k)^{1/2}}
\le C r_k^{1+\beta} \frac{H(\zeta_k,u_k,\rho)^{1/2}}{H(\zeta_k,u_k,r_k)^{1/2}}
\le Cr_k^{\beta-\e}.
\end{aligned}
\]
Then, by invoking \eqref{eq:1:alpha:contradiction} and the conormal boundary condition $\D U_k(0)\cdot (A_k \D w_k)(\zeta_k)=0$ satisfied by $w_k$ on the regular set ${R}(u_k)$, we deduce
\[
\begin{aligned}
    I_k&=\frac{1}{ r_k^{\alpha} [\D w_k]_{C^{0,\alpha}(B_1)} }\Big|\int U_k(x) \D U_k(x) \cdot (A_k \D w_k)(\zeta_k) \phi(x) \,dx\Big|\\
    &\le \frac{1}{ r_k^{\alpha} [\D w_k]_{C^{0,\alpha}(B_1)} }\int  |U_k(x)| |\D U_k(x)-\D U_k(0)|  |(A_k \D w_k)(\zeta_k)| |\phi(x)|\,dx
    \le Cr_k^{\beta-\alpha-\e}\to 0,
\end{aligned}
\]
by choosing $\beta > \alpha+\e$.

\smallskip
 $\bullet$ Case $\delta_k/r_k\to\infty$.
\smallskip

In this case, the blow-up sequence remains at a distance of $\delta_k/r_k$ from the regular set $R(u_k)$.

We begin by recalling the following lemma, which holds true in every dimension $n\ge2$ and provides control over the oscillation of the normal vector to the regular set. 

\begin{Lemma}[{\cite[Lemma 2.12]{TerTorVit24b}}]
    Let $\mu>0$, $\alpha\in(0,1)$, $\e\in(0,1-\alpha)$, $\rho\in(0,1/8)$. Then, there exists $\bar r\in (0,\rho)$, depending on $\mu$, $\alpha$ and $\e$ such that the following holds true.  Let us suppose that $A\in\mathcal{A}$, $u\in H^1(B_1)$, $x_0\in {R}(u)\cap B_{3/4}$ such that
\[
\quad L_A u = 0 \text{ in }B_1, \quad \|u\|_{L^2(B_1)}=1,\quad N(x_0,u,r)\le 1+\e,\quad\text{for every }r\in(0,\rho).
\]
Then,    
\begin{equation}\label{eq:very:regular:gradient:oscillation}
       [\D u]_{C^{0,\alpha}(B_r(x_0))}\le \frac{\mu}{r^\alpha}|\D u(x_0)|, \quad \text{for every } r\in(0,\bar r).   
\end{equation}
\end{Lemma}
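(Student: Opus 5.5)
We argue by a blow-up and compactness argument in the spirit of Lemma \ref{L:2:points}, using the frequency bound to get a uniform $C^{1,\beta}$ modulus at $x_0$ and then exploiting the smallness of $r$ relative to a fixed $\beta>\alpha$. The key observation is that the hypothesis $N(x_0,u,r)\le 1+\e$ for all $r\in(0,\rho)$ forces every blow-up of $u$ at $x_0$ (and at nearby scales) to be close to a linear function. For such functions the gradient oscillation on $B_r(x_0)$ is much smaller than $|\D u(x_0)|$.

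\emph{Step 1: comparing $|\D u(x_0)|$ with $H(x_0,u,r)^{1/2}/r$.} We first show that there is $c>0$, depending only on the class and $\e$, such that
\[
|\D u(x_0)|\ge c\,\frac{H(x_0,u,r)^{1/2}}{r}\quad\text{for every } r\in(0,\rho/2).
\]
We argue by contradiction with the rescalings $U_r(x):=u(x_0+rx)/H(x_0,u,r)^{1/2}$. Since $N(0,U_r,\rho/r)\le 1+\e$ and $[A(x_0+r\cdot)]_{C^{0,1}}\le Lr$, two regimes occur. If $r\to0$, Remark \ref{R:normalized:convergence} gives $U_r\to P$ in $C^{1,\beta}_{\loc}$. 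If $r$ stays bounded below, we use standard compactness instead. In the first regime $P$ is a polynomial with $N_\infty\le 1+\e$, hence linear by Corollary \ref{Cor:linear:poly}. It vanishes at $0$ and has $\|P\|_{L^2(\partial B_1)}=1$, so $|\D P(0)|\ge c>0$. This contradicts $r|\D u(x_0)|/H^{1/2}\to0$. In the bounded-scale regime, $x_0\in R(u)$ alone does not give uniformity, so we would instead use the lower bound $N\ge 1$ (vanishing order one) together with compactness of the class. I expect this to go through, since the frequency being near $1$ at all scales down to $0$ excludes a degenerate gradient in the limit.

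\emph{Step 2: the H\"older seminorm at a fixed small scale.} By scaled Schauder estimates and the doubling of Lemma \ref{L:doubling} with exponent $2(1+\e)$, we obtain
\[
[\D u]_{C^{0,\beta}(B_r(x_0))}\le C\,\frac{H(x_0,u,2r)^{1/2}}{r^{1+\beta}}\le C\,\frac{H(x_0,u,r)^{1/2}}{r^{1+\beta}}.
\]
Interpolating with Step 1 bounds $r^\alpha[\D u]_{C^{0,\alpha}(B_r)}$ by $C|\D u(x_0)|$, but with a non-small constant. This is the main obstacle: smallness $\mu$ must come from the fact that the linear limit has zero gradient oscillation.

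\emph{Step 3: smallness.} Suppose, by contradiction, that there are $u_k$, $x_k$ and $r_k\to0$ with
\[
r_k^\alpha[\D u_k]_{C^{0,\alpha}(B_{r_k}(x_k))}>\mu|\D u_k(x_k)|.
\]
(If $r_k\not\to0$, we simply choose $\bar r$ smaller.) The rescalings $U_k$ converge in $C^{1,\beta}_{\loc}$, with $\beta\in(\alpha,1)$, to a linear $P$ by Step 1's argument. The normalized quantity $r_k^\alpha[\D u_k]_{C^{0,\alpha}(B_{r_k})}H^{1/2}/r_k$ equals $[\D U_k]_{C^{0,\alpha}(B_1)}$. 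By $C^{1,\beta}$ convergence this tends to $[\D P]_{C^{0,\alpha}(B_1)}=0$, since $C^{1,\beta}$ convergence on $B_2$ implies convergence of the $C^{0,\alpha}$ seminorm of gradients for $\alpha<\beta$. Meanwhile $r_k|\D u_k(x_k)|/H^{1/2}=|\D U_k(0)|\to|\D P(0)|>0$. This contradicts the assumed inequality. Hence there is $\bar r$, depending on $\mu,\alpha,\e$ and the class, such that \eqref{eq:very:regular:gradient:oscillation} holds for all $r<\bar r$. The delicate point remains ensuring that the limit is taken along $r_k\to0$, so that the coefficients freeze and Corollary \ref{Cor:linear:poly} applies.
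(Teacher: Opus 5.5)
Your argument is correct. The paper gives no proof of this lemma; it imports it from \cite[Lemma 2.12]{TerTorVit24b}. Your Step 3 is the natural blow-up proof, built from the same ingredients the paper uses for Lemma \ref{L:2:points}. Since $N(0,U_k,t)\le 1+\e$ for $t<\rho/r_k\to\infty$, and the rescaled coefficients have Lipschitz constant $Lr_k\to0$, Remark \ref{R:normalized:convergence} and Corollary \ref{Cor:linear:poly} give a linear limit $P$ with $P(0)=0$ and $\|P\|_{L^2(\partial B_1)}=1$. Hence $|\D P(0)|>0$, while $[\D U_k]_{C^{0,\alpha}(B_1)}\to[\D P]_{C^{0,\alpha}(B_1)}=0$, which gives the contradiction.

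A few things to clean up.

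(i) $r_k\to0$ is not a delicate point. Negating ``there exists $\bar r$'' with $\bar r=1/k$ directly produces counterexamples with $r_k<1/k$, so only the blow-up regime ever occurs. The parenthetical in Step 3, the bounded-scale half of Step 1, and all of Step 2 can therefore be dropped, since Step 3 only uses the $r\to0$ half of Step 1.

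(ii) If you still want Step 1 uniformly for $r\in(0,\rho/2)$, the relevant mechanism is the upper bound on $N$, not the lower bound $N\ge1$. A subsequential limit $U_\infty$ of the rescalings is a nontrivial solution of an equation with Lipschitz (possibly non-constant) coefficients, with $U_\infty(0)=0$ and $\D U_\infty(0)=0$. Hence $|U_\infty(x)|\le C|x|^{1+\beta}$ for every $\beta<1$. Choosing $\beta>\e$, this contradicts the doubling lower bound $H(0,U_\infty,t)\ge c\,t^{2(1+\e)}$ that follows from $N(0,U_\infty,t)=\lim_k N(0,U_k,t)\le 1+\e$. This handles both regimes at once.

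(iii) There is a normalization typo. The correct identity is $[\D U_k]_{C^{0,\alpha}(B_1)}=r_k^{1+\alpha}[\D u_k]_{C^{0,\alpha}(B_{r_k}(x_k))}/H(x_k,u_k,r_k)^{1/2}$, so you multiply the assumed inequality by $r_k/H^{1/2}$, not by $H^{1/2}/r_k$. With that correction the contradiction $0\ge\mu|\D P(0)|>0$ is immediate.

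Finally, $\bar r$ also depends on $\rho$ and on $n,\lambda,\Lambda,L$, which the statement leaves implicit.
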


Rescaling the inequality \eqref{eq:very:regular:gradient:oscillation} to the function $U_k$, we obtain
\begin{equation}\label{eq:oscillation:rescaled}
    [\D U_k]_{C^{0,\alpha}(B_{\bar r/r_k}(p_k^0))} \le \mu \frac{r_k^\alpha}{\bar r^\alpha} |\D U_k(p_k^0)|.
    \end{equation}
Using $\D U_k(p_k^0)\cdot (A_k \D w_k)(\zeta_k)=0$ and \eqref{eq:oscillation:rescaled}, we deduce
\begin{align*}
I_k&=    \Big|\frac{1}{ r_k^{\alpha} [\D w_k]_{C^{0,\alpha}(B_1)} }\int_{B_R} U_k(x) \D U_k(x) \cdot (A_k \D w_k)(\hat x_k) \phi(x)\,dx \Big| \\
&=\Big|\frac{1}{ r_k^{\alpha} [\D w_k]_{C^{0,\alpha}(B_1)} }\int_{B_R} U_k(x) \D U_k(x) \cdot \Big( (A_k \D w_k)(\hat x_k)-(A_k \D w_k)(\zeta_k)+(A_k \D w_k)(\zeta_k) \phi(x)\Big) \,dx \Big|\\
&\le C \frac{1}{ r_k^{\alpha} [\D w_k]_{C^{0,\alpha}(B_1)} }\int_{B_R} |U_k(x)| | \D U_k(x) || (A_k \D w_k)(\hat x_k)-(A_k \D w_k)(\zeta_k)|\,dx\\
&+ C \frac{1}{ r_k^{\alpha} [\D w_k]_{C^{0,\alpha}(B_1)} }\int_{B_R} |U_k(x)| | (\D U_k(x) - \D U_k(p_k^0)) \cdot (A_k \D w_k)(\zeta_k)|\,dx\\
&\le C  \Big(\frac{\delta_k}{r_k}\Big)^\alpha \int_{B_R} |U_k(x)| | \D U_k(x) | \,dx + C r_k^{-\alpha} [\D U_k]_{C^{0,\alpha}(B_{\bar r/r_k}(p_k^0))}\Big(\frac{\delta_k}{r_k}\Big)^\alpha \int_{B_R}|U_k|\,dx\\
& \le C \Big(\frac{\delta_k}{r_k}\Big)^\alpha \|\D U_k\|_{L^\infty(B_R)} + C \Big(\frac{\delta_k}{r_k}\Big)^\alpha|\D U_k(p_k^0)|\\
&\le  C \Big(\frac{\delta_k}{r_k}\Big)^\alpha \|\D U_k\|_{L^\infty(B_{\delta_k/r_k})} .
\end{align*}
To show that $I_k \to 0$, we invoke Lemma \ref{L:Linfty:grad}. In fact, rescaling \eqref{eq:very:regular:bad:case2} in terms of $U_k$ yields
\begin{equation}\label{eq:L^infty:scaling:estimate:gradient}
    \|\D U_k \|_{L^{\infty}(B_{\delta_k/r_k})} \le C \frac{r_k}{\delta_k}|U_k(0)|\le C \frac{r_k}{\delta_k},
\end{equation}
from which the desired conclusion follows.

\subsubsection{The singular case}

Let us now assume that $\delta_k \to 0$ and $\delta_k^* \to 0$, meaning that the \emph{singular set} becomes \emph{visible} in the blow-up limit. We emphasize that this case is also treated in~\cite{TerTorVit24b}, where the authors prove analogous results using techniques and tools that are two-dimensional. Therefore, the arguments developed in the present section can be viewed as a generalization of the two-dimensional results to our setting.

By using Lemma \ref{L:2:points}, we obtain that there exists a new point $p_k^1 \in B_{R'}\cap {R}(U_k)$ such that 
\[
|\nu_k^0 \cdot \nu_k^1|\le 1-\e',\quad \text{ where }\, \nu_k^i:=\frac{\D U_k(p_k^i)}{|\D U_k(p_k^i)|}, \, \text{for } i=0,1.
\]
Let us define $\zeta_k^1 := r_k p_k^1 + \hat x_k$, which represents the point $p_k^1$ at the original scale, noting that this point remains close to $\zeta_k$ as $|\zeta_k^1 - \zeta_k| \leq C \delta_k^*$ by Lemma \ref{L:2:points}. We then introduce the unitary vector
\begin{equation}\label{eq:tau:k}
    \tau_k := \frac{\nu_k^1 - (\nu_k^0 \cdot \nu_k^1) \nu_k^0}{|\nu_k^1 - (\nu_k^0 \cdot \nu_k^1) \nu_k^0|},
\end{equation}
and observe that
\[
|\nu_k^1 - (\nu_k^0 \cdot \nu_k^1) \nu_k^0| \geq 1 - |\nu_k^0 \cdot \nu_k^1| \geq \varepsilon', \quad \text{and} \quad \tau_k \cdot \nu_k^0 = 0.
\]

\smallskip

  $\bullet$ Case $\delta_k/r_k \le C$ and $\delta_k^*/r_k \le C$.

\smallskip

In this case, we consider a normalized blow-up sequence $W_k$ which not contains the following linear terms:
\[
\D W_k(0)\cdot \nu^0_k,\quad \text{and} \quad \D W_k(0)\cdot \tau_k,
\]
noticing that $\text{span}(\nu_k,\tau_k) = \text{span}(e_{2i-1},e_{2i})$. Next, by using the conormal boundary condition \eqref{eq:conormal:wk}, which implies
\[
\D u_k(\zeta_k)\cdot (A_k \D w_k)(\zeta_k) = \D u_k(\zeta_k^1)\cdot (A_k \D w_k)(\zeta_k^1) = 0,
\]
we have that
\[
A_k(\zeta_k) \D W_k(0) \cdot \nu^0_k = 0,
\]
and
\begin{align*}
   & |A_k(\zeta_k) \D W_k(0)\cdot  \tau_k | = \Big|\frac{1}{ r_k^{\alpha} [\D w_k]_{C^{0,\alpha}(B_1)} } (A_k\D w_k)(\zeta_k) \cdot \tau_k\Big|  \\
   &=\Big|\frac{1}{ r_k^{\alpha} [\D w_k]_{C^{0,\alpha}(B_1)} } (A_k\D w_k)(\zeta_k) \cdot  \frac{\nu_k^1-\nu_k^0 (\nu_k^0 \cdot \nu_k^1)}{|\nu_k^1-\nu_k^0 (\nu_k^0 \cdot \nu_k^1)|}\Big|\\
   &=\Big|\frac{1}{ r_k^{\alpha} [\D w_k]_{C^{0,\alpha}(B_1)} } (A_k\D w_k)(\zeta_k) \cdot  \frac{\nu_k^1}{|\nu_k^1-\nu_k^0 (\nu_k^0 \cdot \nu_k^1)|}\Big|\\
   &\le \frac{|(A_k\D w_k)(\zeta_k) - (A_k\D w_k)(\zeta_k^1) |}{ r_k^{\alpha} [\D w_k]_{C^{0,\alpha}(B_1)} } \cdot  \Big|\frac{\nu_k^1}{|\nu_k^1-\nu_k^0 (\nu_k^0 \cdot \nu_k^1)|}\Big|
   \le \frac{L}{\e'} \Big(\frac{\delta_k^*}{r_k}\Big)^\alpha \le C.
\end{align*}
Hence, we have shown that the new sequence $W_k$ satisfies
\[
\lambda|\D W_k(0)|\le  |A_k(\zeta_k)\D W_k(0)|\le C.
\]
Then, the compactness argument involving the first-order expansion at zero of the sequence $ W_k $, as shown earlier, also applies to this new sequence, implying that $ W_k \to W $ in $ C^1_{\mathrm{loc}}(\mathbb{R}^n) $, for some possibly different limit $ W $ which still satisfies the properties established above. With this new choice, we obtain that $I_{i,k}=0$, since $W_k$ does not contain the linear part corresponding to the directions $e_{2i-1}$ and ${e_{2i}}$.

\smallskip
  $\bullet$ Case $\delta_k/r_k \le C$ and $\delta_k^*/r_k \to \infty$. 

\smallskip

By using the boundary condition \eqref{eq:conormal:wk} and recalling who is $\tau_k$ (see \eqref{eq:tau:k}), we deduce
\begin{align*}
I_k&=    \Big|\frac{1}{ r_k^{\alpha} [\D w_k]_{C^{0,\alpha}(B_1)} }\int_{B_R} U_k(x) \D U_k(x) \cdot (A_k \D w_k)(\zeta_k) \phi(x) \,dx\Big| \\
&
= 
\frac{1}{ r_k^{\alpha} [\D w_k]_{C^{0,\alpha}(B_1)} } \Big| \int_{B_R} U_k(x)  \partial_{\nu_k} U_k(x)  (A_k \D w_k)(\zeta_k) \cdot {\nu_k}\phi(x) \\
&\qquad\qquad\qquad\qquad\qquad+\int_{B_R} U_k(x)  \partial_{\tau_k} U_k(x)  (A_k \D w_k)(\zeta_k) \cdot {\tau_k} \phi(x)\,dx \Big|\\
& = \frac{1}{ r_k^{\alpha} [\D w_k]_{C^{0,\alpha}(B_1)} } \Big| \int_{B_R} U_k(x)  \partial_{\tau_k} U_k(x)  (A_k \D w_k)(\zeta_k) \cdot  \frac{\nu_k^1}{|\nu_k^1-\nu_k^0 (\nu_k^0 \cdot \nu_k^1)|}\phi(x) \,dx \Big|\\
& = \frac{1}{ r_k^{\alpha} [\D w_k]_{C^{0,\alpha}(B_1)} } \Big| \int_{B_R} U_k(x)  \partial_{\tau_k} U_k(x)  \Big((A_k \D w_k)(\zeta_k) - (A_k \D w_k)(\zeta_k^1)\Big) \cdot  \frac{\nu_k^1}{|\nu_k^1-\nu_k^0 (\nu_k^0 \cdot \nu_k^1)|} \phi(x)  \,dx\Big|\\
&\le \frac{C}{\e'}\Big(\frac{\delta_k^*}{r_k}\Big)^\alpha \int_{B_R} |U_k(x)||\partial_{\tau_k} U_k(x)| \,dx \le \frac{C}{\e'} \Big(\frac{\delta_k^*}{r_k}\Big)^\alpha [\partial_{\tau_k}U_k]_{C^{0,\beta}(B_{\delta_k^* /r_k})} \le \frac{C}{\e'} \Big(\frac{r_k}{\delta_k^*}\Big)^{\beta-\alpha-\e} \to 0, 
\end{align*}
where in the last inequality we used that $\partial_{\tau_k}U_k(0)=0$, since $\tau_k \cdot \nu_k = 0$ on the regular set ${R}(U_k)$ and the following scaling invariant Schauder estimates combined with the doubling property of $u_k$ (see Lemma \ref{L:doubling}),
\[
[\D U_k]_{C^{0,\beta}(B_{\delta_k^* /r_k})} \le C \Big(\frac{r_k}{\delta_k^*}\Big)^{1+\beta} \frac{H(\hat x_k,u_k,\delta_k^*)}{H(\hat x_k,u_k,r_k)} \le C \Big(\frac{r_k}{\delta_k^*}\Big)^{\beta-\e},
\]
choosing $\beta\in (0,1)$ such that $\beta-\alpha-\e>0$.

\smallskip
  $\bullet$ Case $\delta_k/r_k \to \infty$ and $\delta_k^*/\delta_k \le C$.

\smallskip

First, by using \eqref{eq:L^infty:scaling:estimate:gradient}, namely, $ \|\D U_k \|_{L^{\infty}(B_{\delta_k/r_k})} \le C {r_k}/{\delta_k},$  we get
\begin{align*}
I_k&=    \Big|\frac{1}{ r_k^{\alpha} [\D w_k]_{C^{0,\alpha}(B_1)} }\int_{B_R} U_k(x) \D U_k(x) \cdot (A_k \D w_k)(\hat x_k) \phi(x)\,dx  \Big| \\
& \le C \Big|\frac{1}{ r_k^{\alpha} [\D w_k]_{C^{0,\alpha}(B_1)} }\int_{B_R} U_k(x) |\D U_k(x)| \cdot |(A_k \D w_k)(\hat x_k)-(A_k \D w_k)(\zeta_k)|\,dx \Big|\\
&+C\Big|\frac{1}{ r_k^{\alpha} [\D w_k]_{C^{0,\alpha}(B_1)} }\int_{B_R} U_k(x) \D U_k(x) \cdot (A_k \D w_k)(\zeta_k) \,dx \Big|\\
&\le C \Big(\frac{\delta_k}{r_k}\Big)^\alpha \|\D U_k\|_{L^\infty(B_{\delta_k/r_k})}  +C\Big|\frac{1}{ r_k^{\alpha} [\D w_k]_{C^{0,\alpha}(B_1)} }\int_{B_R} U_k(x) \D U_k(x) \cdot (A_k \D w_k)(\zeta_k) \,dx \Big|\\
&\le C \Big(\frac{r_k}{\delta_k}\Big)^{1-\alpha}  +C\Big|\frac{1}{ r_k^{\alpha} [\D w_k]_{C^{0,\alpha}(B_1)} }\int_{B_R} U_k(x) \D U_k(x) \cdot (A_k \D w_k)(\zeta_k) \,dx \Big|.
\end{align*}
Let us define
\[
II_k:=\Big|\frac{1}{ r_k^{\alpha} [\D w_k]_{C^{0,\alpha}(B_1)} }\int_{B_R} U_k(x) \D U_k(x) \cdot (A_k \D w_k)(\zeta_k) \,dx \Big|.
\]
Using the same computations as in the previous case and \eqref{eq:L^infty:scaling:estimate:gradient}, we obtain
\begin{align*}
II_k&=    \Big|\frac{1}{ r_k^{\alpha} [\D w_k]_{C^{0,\alpha}(B_1)} }\int_{B_R} U_k(x) \D U_k(x) \cdot (A_k \D w_k)(\zeta_k)\,dx \Big| \\
&
= 
\frac{1}{ r_k^{\alpha} [\D w_k]_{C^{0,\alpha}(B_1)} } \Big| \int_{B_R} \Big( U_k(x)  \partial_{\nu_k} U_k(x)  (A_k \D w_k)(\zeta_k) \cdot {\nu_k} + U_k(x)  \partial_{\tau_k} U_k(x)  (A_k \D w_k)(\zeta_k) \cdot {\tau_k}\Big)\,dx \Big|\\
& = \frac{1}{ r_k^{\alpha} [\D w_k]_{C^{0,\alpha}(B_1)} } \Big| \int_{B_R} U_k(x)  \partial_{\tau_k} U_k(x)  (A_k \D w_k)(\zeta_k) \cdot  \frac{\nu_k^1}{|\nu_k^1-\nu_k^0 (\nu_k^0 \cdot \nu_k^1)|}\,dx \Big|\\
& = \frac{1}{ r_k^{\alpha} [\D w_k]_{C^{0,\alpha}(B_1)} } \Big| \int_{B_R} U_k(x)  \partial_{\tau_k} U_k(x)  \Big((A_k \D w_k)(\zeta_k) - (A_k \D w_k)(\zeta_k^1)\Big) \cdot  \frac{\nu_k^1}{|\nu_k^1-\nu_k^0 (\nu_k^0 \cdot \nu_k^1)|}\,dx  \Big|\\
&\le \frac{C}{\e'} \Big(\frac{\delta_k^*}{r_k}\Big)^\alpha \|\partial_{\tau_k}U_k\|_{L^{\infty}(B_{\delta_k /r_k})}\le 
\frac{C}{\e'} \Big(\frac{\delta_k^*}{r_k}\Big)^\alpha \Big(\frac{r_k}{\delta_k}\Big)
\le \frac{C}{\e'} \Big(\frac{r_k}{\delta_k}\Big)^{1-\alpha}\to 0, 
\end{align*}
and we conclude that $I_k\to 0$.

\smallskip
  $\bullet$ Case $\delta_k/r_k \to \infty$ and $\delta_k^*/\delta_k \to \infty$.  
 
\smallskip

First, we need the following scaling invariant estimates combined with the doubling property applied to $U_k$ (see Lemma \ref{L:doubling}) with balls $B_{\delta_k^*/r_k}$ and $B_{\delta_k/r_k}$
\begin{align*}
    [\D U_k]_{C^{0,\beta}(B_{\delta_k^*/r_k}(p_k^0))} &\le C \Big(\frac{r_k}{\delta_k^*}\Big)^{1+\beta} \Big(\fint_{B_{\delta_k^*/r_k}(p_k^0)}U_k^2 \,dx \Big)^{1/2} \le C \Big(\frac{r_k}{\delta_k^*}\Big)^{1+\beta} \Big(\frac{\delta_k^*}{\delta_k}\Big)^{1+\e} \Big(\fint_{B_{\delta_k/r_k}(p_k^0)}U_k^2 \,dx \Big)^{1/2}.
\end{align*}
Next, let us observe that
\begin{align*}
    \Big(\fint_{B_{\delta_k/r_k}(p_k^0)}U_k^2(x) \,dx  \Big)^{1/2} &= \Big(\fint_{B_{\delta_k/r_k}(p_k^0)}(U_k(x)-U_k(p_k^0))^2 \,dx \Big)^{1/2}\\
&\le C {\frac{\delta_k}{r_k}} [U_k]_{C^{0,1}(B_{\delta_k/r_k}(p_k^0))} 
\le C {\frac{\delta_k}{r_k}} [U_k]_{C^{0,1}(B_{2 \delta_k/r_k}(0))} \le C|U_k(0)|,
\end{align*}
where in the last inequality we argue exactly as in Lemma \ref{L:Linfty:grad}, remarking that $B_{\delta_k/r_k}(p_k^0)\subset B_{2 \delta_k/r_k}(0)$.

Hence, we proceed as in the previous case, but for the last line in the estimate of $II_k$ we obtain 
\begin{align*}
     II_k &\le C \Big(\frac{\delta_k^*}{r_k}\Big)^\alpha \int_{B_R} |\partial_{\tau_k} U_k(x)-\partial_{\tau_k}U_k(p_k^0)|\,dx\le C \Big(\frac{\delta_k^*}{r_k}\Big)^\alpha \Big(\frac{\delta_k}{r_k}\Big)^\beta [\partial_{\tau_k}U_k]_{C^{0,\beta}(B_{\delta_k^*/r_k}(p_k^0))}\\
    &\le C \Big(\frac{r_k}{\delta_k^*}\Big)^{1+\beta-\alpha} \Big(\frac{\delta_k}{r_k}\Big)^\beta  \Big(\frac{\delta_k^*}{\delta_k}\Big)^{1+\e} \Big(\fint_{B_{\delta_k/r_k}(p_k^0)}U_k^2 \,dx \Big)^{1/2} \le C \Big(\frac{r_k}{\delta_k}\Big)^{1-\alpha} \Big(\frac{\delta_k}{\delta_k^*}\Big)^{\beta-\e-\alpha}\to 0,
\end{align*}
since $\beta-\e-\alpha>0$. Thus, in all cases, we have shown that $I_k$ vanishes as $k \to \infty$, which concludes the proof of Lemma \ref{L:I:ik}. By repeating this argument for each term $I_{i,k}$, we deduce that the right-hand side of \eqref{eq:Wk:normalized} vanishes as $k \to \infty$, completing the proof.
\end{proof}

\section*{Acknowledgment} I would like to thank Susanna Terracini for many fruitful discussions on the topic.
The author is supported by the INdAM - GNAMPA Project "Struttura fine e regolarit\`a in problemi variazionali non-lineari" codice CUP E53C25002010001.

\end{document}